\definecolor{BlueMAP}{rgb}{0.27,0.45,0.77}
\newtcolorbox{boxblue}[1]{breakable,colback=white,colframe=BlueMAP,fonttitle=\bfseries,title=#1,left=4pt,right=4pt,top=4pt,bottom=6pt}
\newtcolorbox{boxgreen}[1]{breakable,colback=white,colframe=red,fonttitle=\bfseries,title=#1,left=4pt,right=4pt,top=4pt,bottom=6pt}
\newtcolorbox{partie}{colback=white,colframe=BlueMAP,fonttitle=\bfseries}
\definecolor{darkolivegreen}{rgb}{0.33, 0.42, 0.18}
\definecolor{celestialblue}{rgb}{0.29, 0.59, 0.82}
\newtheorem{theorem}{Theorem}[section]
\newtheorem{remark}[theorem]{Remark}
\theoremstyle{definition}
\newtheorem{definition}{Definition}
\numberwithin{equation}{section} % Number eqs with section-number
\newcommand{\cB}{\ensuremath{\mathcal{B}}}
\newcommand{\cI}{\ensuremath{\mathcal{I}}}
\newcommand{\cL}{\ensuremath{\mathcal{L}}}
\newcommand{\cM}{\ensuremath{\mathcal{M}}}
\newcommand{\cN}{\ensuremath{\mathcal{N}}}
\newcommand{\cP}{\ensuremath{\mathcal{P}}}
\newcommand{\cS}{\ensuremath{\mathcal{S}}}
\newcommand{\cW}{\ensuremath{\mathcal{W}}}
\newcommand{\bE}{\ensuremath{\mathbb{E}}}
\newcommand{\bG}{\ensuremath{\mathbb{G}}}
\newcommand{\bN}{\ensuremath{\mathbb{N}}}
\newcommand{\bP}{\ensuremath{\mathbb{P}}}
\newcommand{\bR}{\ensuremath{\mathbb{R}}}
\newcommand{\bS}{\ensuremath{\mathbb{S}}}
\newcommand{\bT}{\ensuremath{\mathbb{T}}}
\newcommand{\pbC}{\ensuremath{\textbf{C}}}
\newcommand{\pbD}{\ensuremath{\textbf{D}}}
\newcommand{\pbH}{\ensuremath{\textbf{H}}}
\newcommand{\pbI}{\ensuremath{\textbf{I}}}
\newcommand{\pbV}{\ensuremath{\textbf{V}}}
\newcommand{\pbW}{\ensuremath{\textbf{W}}}
\newcommand{\pbX}{\ensuremath{\textbf{X}}}
\newcommand{\bell}{\ensuremath{\mathbb{\ell}}}
\newcommand{\rG}{\ensuremath{\mathrm{G}}}
\newcommand{\rV}{\ensuremath{\mathrm{V}}}
\newcommand{\rY}{\ensuremath{\mathrm{Y}}}
\def\[{\left[}
\def\]{\right]}
\def\<{\langle}
\def\>{\rangle}
\def\({\left(}
\def\){\right)}
\def\[{\left [}
\def\]{\right]}
\def\({\left(}
\def\){\right)}
\newcommand{\norm}[1]{\Vert #1 \Vert}
\newcommand{\Vn}{\ensuremath{{V_n}}}
\newcommand{\Wm}{\ensuremath{{W_m}}}
\newcommand{\cond}{\; :\;}
\newcommand{\wc}{\ensuremath{\mathrm{wc}}}
\newcommand{\ms}{\ensuremath{\mathrm{ms}}}
\newcommand{\pbdw}{\ensuremath{\text{(pbdw)}}}
\newcommand{\wcpbdw}{\ensuremath{\text{(wc, pbdw)}}}
\newcommand{\mspbdw}{\ensuremath{\text{(ms, pbdw)}}}
\newcommand{\test}{\ensuremath{\text{test}}}
\newcommand{\ord}{\ensuremath{\mathcal{O}}}
\newcommand{\dist}{\operatorname{dist}}
\newcommand{\diag}{\operatorname{diag}}
\newcommand{\charFun}{\ensuremath{\mathds{1}}}
\DeclareMathOperator*{\argmin}{arg\,min}
\DeclareMathOperator*{\arginf}{arg\,inf}
\newcommand{\dx}{\ensuremath{\mathrm dx}}
\newcommand{\dt}{\ensuremath{\mathrm dt}}
\DeclareMathOperator*{\vspan}{span}
\newcommand{\eps}{\ensuremath{\varepsilon}}
\newcommand{\templates}{\ensuremath{\text{templates}}}
\newcommand{\BT}{\texttt{BT}}
\newcommand{\vox}{\text{vox}}
\newcommand{\om}[1]{\textcolor{black}{#1}} % Comments Olga
\newcommand{\nuevo}[1]{#1}
\newcommand{\new}[1]{#1}
\begin{document}

\title{
State Estimation with Model Reduction and Shape Variability. Application to biomedical problems.
}

\author{Felipe Galarce\footnote{\textsl{Corresponding author.}. Weierstrass-Institut f\"ur Angewandte Analysis und Stochastik. Leibniz-Institut im Forschungsverbund Berlin e.~V.}, Damiano Lombardi\footnote{Centre de Recherche INRIA de Paris \& Laboratoire Jacques-Louis Lions, France.}, Olga Mula\footnote{Paris-Dauphine University, PSL Research University, CNRS, UMR 7534, CEREMADE, France.}}

\maketitle

\begin{abstract}
We develop a mathematical and numerical framework to solve state estimation problems for applications that present variations in the shape of the spatial domain. This situation arises typically in a biomedical context where inverse problems are posed on certain organs or portions of the body which inevitably involve morphological variations. If one wants to provide fast reconstruction methods, the algorithms must take into account the geometric variability. We develop and analyze a method which allows to take this variability into account without needing any a priori knowledge on a parametrization of the geometrical variations. For this, we rely on morphometric techniques involving Multidimensional Scaling, and couple them with reconstruction algorithms that make use of linear subspaces pre-computed on a database of geometries. We prove the potential of the method on a synthetic test problem inspired from the reconstruction of blood flows and quantities of medical interest with Doppler ultrasound imaging.\\

\begin{small}
\textbf{Key words.} Inverse problems, Shape variability, Non-parametric domains, Model reduction, Multi-dimensional scaling, Variational data assimilation.\\
\end{small}

\begin{small}
\textbf{AMS subject classifications.} 65D99,76Z05,35R30
\end{small}
\end{abstract}

\section{Introduction}

\subsection{\new{Scientific setting and contribution}}
A central task to address numerous applications in science and engineering is the problem of repeatedly evaluating the output of an expensive forward model for many instances of input parameter values. Such settings include the numerical solution of parametric Partial Differential Equations (PDEs) for different values of the parameter, and, more generally, the multiple evaluation of input-output maps defined by computer models. In the case of parametric PDEs, this task is usually known as model order reduction (MOR). To guide the subsequent discussion, consider the prototypical parametric PDE,
$$
\cP(u, y)(x)=0,\quad \forall x \in \Omega
$$
where $\cP$ is a differential operator and $y$ is a vector of parameters that describes physical properties. We assume $y$ can take values from given compact set $\rY\subset \bR^p$. $\Omega\subset \bR^d$ is an open bounded set which denotes the domain of the independent variables in the PDE. It usually refers to space but it is not limited to that meaning, and it could also refer to more elaborate sets of variables such as time, momentum, or other physical variables. For every $y\in \rY$, we assume that the PDE has a unique solution $u=u(y)$ on a  Hilbert space $V(\Omega)$ of real-valued functions on $\Omega$.

The main task of model reduction is to build a fast parameter-to-solution map that accurately approximates the set of PDE solutions
\begin{equation}
%\label{eq:manifold}
\cM(\Omega) \coloneqq \{ u(y) \in V(\Omega) \; :\; y\in\rY \},
\end{equation}
when the parameter $y$ varies in $\rY$. This set is sometimes referred to as the {\em solution manifold}. Classically, the bottom line of most strategies has been based on approximating $\cM(\Omega)$ with linear spaces from $V(\Omega)$, but developing nonlinear approximation strategies to overcome certain known bottlenecks is a subject of very active research (for some examples, we refer to \cite{AZF2012, Welper2017-TSI, ELMV2020, BCDGJP2021}).

Most linear and nonlinear model reduction methods are built for applications when $\Omega$ is a fixed domain, but in numerous situations $\Omega$ can actually vary, and this raises interesting challenges. This question about geometric variability is by far not new in model reduction, and numerous algorithms and works have been proposed to address it. Most existing strategies crucially rely on assuming that the family of domains $\Omega$ is generated by a known (and easy) parametrization. The problem about the geometric variabillity can then be addressed by computing a reduced model on a reference domain, which is then mapped to a new target geometry in a relatively easy manner thanks to the assumption of an explicit parametrization of the geometries. The type of applications in which these developments have mostly been considered are connected to \emph{forward problems} where the value of $y$ and of the geometrical parameters of the geometry are given as an input to approximate $u(y)\in V(\Omega)$.

This paper is a contribution to go beyond this setting on several fronts:
\begin{itemize}
\item \emph{Nonparametric geometries:} We propose a method where the family of geometries is \emph{nonparametric}. In other words, we do not assume a known parametrization of the domains. This is particularly relevant in certain applications such as biomedical problems where the domain is often given by the shape of a certain organ which inevitably presents morphological variations. These variations are difficult to parametrize, and, in fact, information on the family of geometries is often given by a collection of images of each patient's organ. 
\item \emph{Forward and inverse problems:} We explain how the method can be applied not only for the purpose of forward problems, but also for solving inverse state estimation problems in a reasonable amount of time. In the inverse problem setting, we do not get the value of the parameters $y$. Instead, we are given measurement observations (whose nature varies depending on the application). This makes inverse problems ill posed, and it adds an extra layer of difficulty to the problem. Although the developed method is general and could be used with both linear and nonlinear types of methods, the presentation in this paper focuses mostly on linear methods, and results about the benefits of using nonlinear strategies is left for future work. As such, the main method which we use for forward reduced modeling is based on (linear) Principal Component Analysis (PCA), and we adapt the so-called Parametrised Background Data Weak (PBDW) approach for inverse state estimation. This algorithm was proposed in \cite{MPPY2015} and analyzed in subsequent papers such as \cite{MMT2016, BCDDPW2017, Taddei2017, BCMN2018, GMMT2019, CDDFMN2020, CDMN2020}.
\item \emph{Enhanced use of database information:} To work in a nonparametric geometrical setting, our approach requires to have a collection of domains $\Omega$ and solution manifolds $\cM(\Omega)$ with their associated forward reduced models to carry out a learning phase. We then introduce a notion of measuring distances between manifold sets $\cM(\Omega)$ on different domains $\Omega$. This allows us to find the geometry from the database with the closest physics to a new given target geometry. The precomputed reduced model of the closest neighbor is then transported to the target geometry to solve forward or inverse problem queries. The use of a database of geometries is a salient novelty with respect to most approaches, and it allows to better take into account potential variations of the physics that could be induced by domain changes. Working with a database is however not fully new, though. It was proposed in \cite{guibert2014} which is probably the contribution from the literature that shares more points of contact with the present work. We discuss the main novelties and similarities in Section \ref{sec:previous-works}, where we also give an overview of previous works.
\end{itemize}

Our work was motivated by applications in biomedical engineering. The prototypical yet fundamental situation regards the clinical applications in which non-invasive measurements (typically acquired by medical imaging) are exploited in order to infer non-observable mechanical or physiological properties, or to perform state estimation. The time constraints of the clinical applications clearly motivate and justify the use of model order reduction, and inverse state estimation algorithms based on them. However, the inter-patient variability is often very large and manifests itself also in terms of anatomy, hence the geometrical variability. This points towards working with a database of geometries $\Omega$ obtained from previous patient's examinations in order to be able to examine a wide range of patients. On the data set of geometries, we precompute linear (or nonlinear) reduced models, and then re-use this information for the examination in real time of a new given patient which comes with a new morphology.

\subsection{Organization of the paper} At the end of Section \ref{sec:previous-works} we give an overview of previous works on model reduction in variable geometries. In Section \ref{sec:stateEst} we present the context of inverse state estimation and the methods we use in the present work. In Section \ref{sec:strategy} we detail the strategy we adopt in order to deal with variable domain geometries. In Section \ref{sec:error-analysis} we propose an error analysis for the state estimation problem. In section \ref{sec:practical-realization} we describe in detail how the different steps of the procedure are practically implemented and we conclude by presenting a numerical experiment to assess the method. The example takes inspiration from the reconstruction of blood flows with Doppler ultrasound images.

\subsection{Previous works on model reduction with variable geometries}
\label{sec:previous-works}
The topic of working with variable geometries is common to several fields of research such as shape optimisation (\cite{davies2008,maury2017,de2008}), inverse scattering problems (\cite{colton1998}), geometry morphometrics (\cite{bookstein2018,mitteroecker2009}). In the field of model order reduction, the main challenge stems from the fact that, usually, one needs to define in which linear reduced space the computations are done, and this depends on the domain. Numerous works have studied this issue in the literature. A first example is provided in \cite{lovgren2006}, in which a reduced-element method is devised, to take advantage of domain decomposition techniques and adapt to various potentially deformed domains. An important class of methods consists in mapping the domains into a same reference configuration and write the equations in this latter. In \cite{rozza2013}, the authors consider the set of transformations with affine parametrisation and their effect on the inf-sup stability for a reduced-basis formulation of the Stokes problem. In \cite{manzoni2017eff}, the computational domain is deformed thanks to an elastic displacement and the non-affine dependence of the equations on the domain is tackled by using a matrix-DEIM approach. A similar approach is proposed in \cite{dal2019hyper} to efficiently reduce the computational cost of parametrised fluid models. In \cite{chamoin2019}, an isogeometric analysis framework is used to deal with the domain parametrisation and build a reduced-basis method to speed up shape optimisation problems. A similar approach is proposed in \cite{garotta2020}. In \cite{sevilla2020}, the parametrisation of the domain (obtained by considering \new{ non-uniform rational B-splines \cite{piegl1996nurbs}})) is incorporated as extra-coordinate in a Progressive Generalised Decomposition (PGD) method. In \cite{salmoiraghi2018}, a free-form deformation method is coupled to Proper Orthogonal Decomposition (POD, see e.g. \cite{berkooz1993proper, sirovich1987turbulence, rathinam2003new}) in the context of shape optimisation in aerodynamics. In \cite{karatzas2019}, instead of mapping the domains into a common reference configuration, the shifted boundary method is applied to deal with the geometry parametrisation. By doing so, we avoid the changes of coordinate; to deal with the intrinsic non-linearity, the authors propose to use the GNAT method or the gappy-POD. In \cite{akkari2019}, an hyper-reduction framework is used to deal with non-parametrised geometrical variations of the domain in the context of fluid-mechanics. In \cite{karatzas2020}, a reduced-basis formulation is proposed to deal with a cut-FEM embedding method. In \cite{stabile2020}, the reduced-basis functions are defined on an average-deformed configuration in order to speed up finite volume computations for fluid models with variable geometries. In \cite{forti2014}, the authors consider the problem of the parametrisation of interfaces in the context of fluid-structure interaction problems. In \cite{hess2014}, the reduced-basis method is used to efficiently solve the Maxwell equations to speed up the design of semiconductors. In \cite{taddei2020,taddei2020b} and other recent works, the authors consider the problem of registration applied to model reduction: by suitably transforming the domain we can achieve the reduction efficiency. Numerous applications including geometry reduction can take advantage of such techniques. 

To the best of our knowledge, the work which shares more similarities with the present contribution is \cite{guibert2014}. In that work, a set of realistic patient template geometries is built without knowing the underlying, potentially high-dimensional, parametrisation. The authors then construct the reduced-order model on a reference geometry computed as the average of the available templates (in the sense defined in the paper). A set of transformations allow to map fields between the geometries and the average geometry. The two main differences with respect to the present work are the following: we construct a reduced-order method in view of performing the reconstruction given some observable so, instead of constructing an atlas based solely on geometric information, we construct a set of templates based also on the physics of the problem we are considering. In order to solve the state estimation problem in a reduced way, we adapt the Parametrised Background Data Weak approach \cite{MPPY2015}.

\section{Multi-Domain State Estimation: Problem Setting}
\label{sec:stateEst}
In the following, the terms geometry, spatial domain, and shape will be used interchangeably whenever there is no ambiguity. 

\subsection{State estimation on a given domain}
\label{sec:SE-fixed-domain}
Let $\Omega$ be a fixed given domain of $\bR^d$ with dimension $d\geq 1$, and let $V(\Omega)$ be a Hilbert space defined over $\Omega$. The space is endowed with an inner product $\<\cdot, \cdot\>$ and induced norm $\Vert \cdot \Vert$. The choice of $V(\Omega)$ must be relevant for the problem under consideration, and typical options are $L^2$, $H^1$ or some Reproducing Kernel Hilbert Space.

Our goal is to recover an unknown function $u\in V(\Omega)$ from $m$ possibly noisy measurement observations
\begin{equation}
\label{eq:meas}
y_i = \ell_i(u) + \eta_i,\quad i=1,\dots,m,
\end{equation}
where the $\ell_i$ are linearly independent linear forms from $V'(\Omega)$ and the $\eta_i$ are unknown measurement errors. In the following, for the sake of simplicity, we will assume that there is no noise ($\eta_i = 0, \, i=1,\dots,m$) but the main methodology which we develop could easily be extended to deal with noisy measurements. In practical applications, each $\ell_i$ models a sensor device which is used to collect the measurement data $\ell_i(u)$. \new{In the applications which we present in our numerical tests, the observations come in the form of an image and each $\ell_i$ models the response of the system on a given pixel. Figure \ref{fig:doppler-img} illustrates a complete synthetic image.}

We denote by $\omega_i \in V(\Omega)$ the Riesz representers of the $\ell_i$. They are defined via the variational equation
$$
\left< \omega_i, v \right> = \ell_i(v),\quad \forall v \in V(\Omega).
$$
Since the $\ell_i$ are linearly independent in $V'(\Omega)$, so are the $\omega_i$ in $V(\Omega)$ and they span an $m$-dimensional space
$$
\Wm(\Omega)={\rm span}\{\omega_1,\dots,\omega_m\} \subset V(\Omega).
$$

When there is no measurement noise, knowing the observations $y_i=\ell_i(u)$ is equivalent to knowing the orthogonal projection
\begin{equation}
\omega = P_{\Wm(\Omega)} u.
\end{equation}
In this setting, the task of recovering $u$ from the measurement observation $\omega$ can be viewed as building a recovery algorithm
$$
A:\Wm(\Omega)\mapsto V(\Omega)
$$
such that $A(P_{\Wm(\Omega)}u)$ is a good approximation of $u$ in the sense that $\Vert u - A(P_{\Wm(\Omega)}u) \Vert$ is small.

Recovering $u$ from the measurements $P_{\Wm(\Omega)} u$ is a very ill-posed problem since $V(\Omega)$ is generally a space of very high or infinite dimension so, in general, there are infinitely many $v\in V(\Omega)$ such that $P_{\Wm(\Omega)} v = \omega$. It is thus necessary to add some a priori information on $u$ in order to recover the state up to a guaranteed accuracy. In the following, we work in the setting where $u$ is a solution to some parameter-dependent PDE of the general form
\begin{equation*}
\cP(u, y) = 0,
\end{equation*}
where $\cP$ is a differential operator and $y$ is a vector of parameters that describes some physical property and lives in a given set $\rY\subset \bR^p$. For every $y\in \rY$, we assume that the PDE has a unique solution $u=u(y)\in V(\Omega)$. Therefore, our prior on $u$ is that it belongs to the set
\begin{equation}
\label{eq:manifold}
\cM(\Omega) \coloneqq \{ u(y) \in V(\Omega) \; :\; y\in\rY \},
\end{equation}
which is sometimes referred to as the {\em solution manifold}.

\paragraph{Performance Benchmarks:} The quality of a recovery mapping $A$ is usually quantified in two ways:
\begin{itemize}
\item If the sole prior information is that $u$ belongs to the manifold $\cM(\Omega)$, the performance is usually measured by the worst case reconstruction error
\begin{equation}
\label{eq:err-A-wc}
E_{\wc}(A,\cM(\Omega)) = \sup_{u\in\cM(\Omega)} \Vert u - A(P_{\Wm(\Omega)} u) \Vert \, .
\end{equation}
\item In some cases $u$ is described by a probability distribution $p$ on $V(\Omega)$ supported on $\cM(\Omega)$. This distribution is itself induced by a probability distribution on $\rY$ that is assumed to be known. When no information about the distribution is available, usually the uniform distribution is taken. In this Bayesian-type setting, the performance is usually measured in an average sense through the mean-square error
\begin{equation}
\label{eq:err-A-ms}
E^2_{\ms}(A,\cM(\Omega)) = \bE\left( \Vert u - A(P_{\Wm(\Omega)} u) \Vert^2\right) = \int_{V(\Omega)} \Vert u - A(P_{\Wm(\Omega)} u)\Vert^2 dp(u) \, ,
\end{equation}
and it naturally follows that $E_{\ms}(A,\cM(\Omega))\leq E_{\wc}(A,\cM(\Omega)) $.
\end{itemize}

\paragraph{PBDW as our practical algorithm:} In this work, we will reconstruct with the Parametrized-Background Data-Weak algorithm (PBDW, \cite{MPPY2015}). Other choices would of course be possible but the PBDW algorithm is relevant for the following reasons:
\begin{itemize}
\item \textbf{Simplicity and Speed:} It is easily implementable and it provides reconstructions in near-real time.
\item \textbf{Optimality:} It has strong connections with optimal linear reconstruction algorithms as has been studied in \cite{BCDDPW2017, CDDFMN2020}.
\item \textbf{Extensions:} If required, the algorithm can easily be extended to enhance its reconstruction performance (see \cite{CDMN2020, GGLM2021}). In particular, it is shown in \cite{CDMN2020} that piece-wise PBDW reconstruction strategy can deliver near-optimal reconstruction performance. The PBDW algorithm can also be easily adapted to accommodate noisy measurements (see \cite{Taddei2017, GMMT2019}) and some easy to implement extensions to mitigate the model error exist (in the following however, we assume the PDE model is perfect for the sake of simplicity).
\end{itemize}
Since the geometry of $\cM(\Omega)$ is generally complex, optimization tasks posed on $\cM(\Omega)$ are difficult (lack of convexity, high evaluation costs for different parameters). Therefore, instead of working with $\cM(\Omega)$, PBDW works with a linear (or affine) subspace $V_n(\Omega)$ of reduced dimension $n$ which is expected to approximate the solution manifold well in the sense that the approximation error of the manifold,
\begin{equation}
\label{eq:error-manifold}
\delta_n^{(\wc)} \coloneqq \sup_{u\in\cM(\Omega)} \dist(u, V_n(\Omega))  \,
,\quad \text{or} \quad
\delta^{(\ms)}_n \coloneqq \bE\left(  \dist(u, V_n(\Omega))^2\right)^{1/2} \, 
\end{equation}
decays rapidly if we increase the dimension $n$. It has been proven in \cite{CD2015acta} that it is possible to find such hierarchies of spaces $(V_n(\Omega))_{n\geq 1}$ for certain manifolds coming from classes of elliptic and parabolic problems, and numerous strategies have been proposed to build the spaces in practice (see, e.g., \cite{BMPPT2012,RHP2007} for reduced basis techniques and \cite{CD2015acta,CDS2011} for polynomial approximations in the $y$ variable).

Assuming that we are given a \new{linear subspace} $V_n(\Omega)$ with $1\leq n\leq m$, the PBDW algorithm $$A^\pbdw_{m,n}:\Wm \to V(\Omega),$$ gives for any $\omega \in \Wm(\Omega)$ a solution of
\begin{equation}
\label{eq:pbdw-algo}
A^\pbdw_{m,n}(\omega) \in \argmin_{u \in \omega + \new{W_m(\Omega)}^\perp } \dist(u, V_n(\Omega)).
\end{equation}

\new{
For any pair of closed subspaces $(E,F)$ of $V$, we define $\beta(E, F)$ as 
\begin{equation}
\label{eq:infsup}
\beta(E,F):=\inf_{e\in E}\sup_{f\in F}\frac {\<e,f\>}{\|e\|\, \|f\|}=\inf_{e\in E}\frac {\|P_{F} e\|}{\|e\|} \in [0,1] .
\end{equation}
The minimizer of \eqref{eq:pbdw-algo} is unique as soon as $n\leq m$ and $\beta(\Vn(\Omega),\Wm(\Omega))>0$, which is an assumption to which we adhere in the following. In practice, solving problem \eqref{eq:pbdw-algo} boils down to solving a linear least squares minimization problem. We refer, e.g., to \cite[Appendix A]{GLM2021} for details on how to compute it in practice. 
}

We can prove that $A^\pbdw_{m,n}$ is a bounded linear map from $\Wm(\Omega)$ to $\Vn(\Omega)\oplus(W_m(\Omega)\cap V_n(\Omega)^\perp)$. In fact, it is a simple least squares problem whose cost is essentially \new{$mn + n^2$}. Therefore, if the dimension $n$ of the linear subspace is moderate, the reconstruction with \eqref{eq:pbdw-algo} takes place in close to real-time.

\new{The reconstruction error $\Vert u - A^\pbdw_{m,n}(\omega) \Vert$ was first studied in \cite{MPPY2015} and further developed in \cite{BCMN2018}. In the later, the following result can be found: for any $u\in V(\Omega)$ it holds}
\begin{equation}
\Vert u - A^\pbdw_{m,n}(\omega) \Vert
\leq \beta^{-1}(V_n, \Wm) \Vert u - P_{\Vn\oplus(W_m\cap V_n^\perp) } u \Vert
\leq \beta^{-1}(V_n, \Wm) \Vert u - P_{\Vn}u \Vert,
\label{eq:pbdw_bound}
\end{equation}
where we have omitted the dependency of the spaces on $\Omega$ in order not to overload the notation, and we will keep omitting this dependency until the end of this section. Depending on whether $\Vn$ is built to address the worst case or mean square error, the reconstruction performance over the whole manifold $\cM$ is bounded by
\begin{equation}
\label{eq:err-wc-pbdw}
e_{m,n}^\wcpbdw \coloneqq
E_{\wc}(A^\pbdw_{m,n}, \cM) \leq \beta^{-1}(V_n, \Wm) \new{\sup_{u\in \cM}}\dist(u, V_n\oplus(V_n^\perp\cap\Wm)) \leq \beta^{-1}(V_n, \Wm) \, \delta_n^{(\wc)},
\end{equation}
or
\begin{align}
e_{m,n}^\mspbdw \coloneqq
E_{\ms}(A^\pbdw_{m,n}, \cM)
&\leq \beta^{-1}(V_n, \Wm) \bE\left(\dist(u, V_n\oplus(V_n^\perp\cap\Wm))^2\right)^{1/2}  \nonumber\\
&\leq \beta^{-1}(V_n, \Wm) \, \delta_n^{(\ms)} . \label{eq:err-ms-pbdw}
\end{align}
Note that $\beta(\Vn, \Wm)$ can be understood as a stability constant. It can also be interpreted as the cosine of the angle between $\Vn$ and $\Wm$. The error bounds involve the distance of $u$ to the space $V_n\oplus(V_n^\perp\cap\Wm)$ which provides slightly more accuracy than the linear subspace $\Vn$ alone. This term is the reason why it is sometimes said that the method can correct model error to some extent. In the following, to ease the reading we will write errors only with the second type of bounds \eqref{eq:err-ms-pbdw} that do not involve the correction part on $V_n^\perp\cap\Wm$.

An important observation is that for a fixed measurement space $W_m$ (which is the setting in our numerical tests), the error functions
$$
n\mapsto e_{m,n}^\wcpbdw,\quad \text{and} \quad n\mapsto e_{m,n}^\mspbdw,
$$
reach a minimal value for a certain dimension $n^*_\wc$ and $n^*_\ms$ as the dimension $n$ varies from 1 to $m$. This behavior is due to the trade-off between:
\begin{itemize}
\item the improvement of the approximation properties of $V_n$ as $n$ grows ($\delta_n^{(\wc)}$ and $\delta_n^{(\ms)} \to 0$ as $n$ grows)
\item the degradation of the stability of the algorithm, given here by the decrease of $\beta(V_n, \Wm)$ to 0 as $n\to m$. When $n> m$, $\beta(V_n, \Wm)=0$.
\end{itemize}
As a result, the best reconstruction performance with  PBDW is given by
$$
e_{m,n^*_\wc}^\wcpbdw = \min_{1\leq n \leq m} e_{m,n}^\wcpbdw,
\quad\text{or}\quad
e_{m,n^*_\ms}^\mspbdw = \min_{1\leq n \leq m} e_{m,n}^\mspbdw.
$$

\subsection{Obstructions when the spatial domain is not given a priori}
The speed of the above reconstruction algorithm crucially relies on the fact that we have assumed that the spatial domain $\Omega$ is given to us a priori. Thanks to this we can precompute the linear subspaces $V_n(\Omega)$ before the reconstruction takes place, and we only need to solve \eqref{eq:pbdw-algo} during the  reconstruction, which is a computation that can be done in near real-time.  The offline computation of the reduced model should be seen as a training phase, and it can be computationally intensive and time-consuming for complex physical systems.

There are however cases in which we cannot assume that $\Omega$ is given a priori. This situation typically arises in biomedical applications where state estimation needs to be performed on a certain part of the body for different patients which inevitably present morphological variations. In this case, given a new target geometry $\Omega$, one could of course generate $\cM(\Omega)$ and derive a linear subspace $V_n(\Omega)$ but this task would not be feasible in real-time, and the method would no be useful for real time decisions. To avoid this computational bottleneck, we propose a method to quickly build a space $V_n(\Omega)$ by using reduced models which have been pre-computed on a database of template geometries which we suppose to be available offline. The idea consists in finding the best reduced model from the template geometries, and then to transport it to the target geometry $\Omega$. Once this is done, we reconstruct with PBDW on the target geometry. The next section presents the details of our proposed strategy.

\section{Proposed strategy for fast state estimation}
\label{sec:strategy}
We consider a set $\rG$ of spatial domains in $\bR^d$. The set can potentially be infinite. An example for $\rG$ is the set of human carotid arteries or, more generally,  the set of shapes of a certain organ. Our goal is to build a state estimation procedure that is fast for every geometry $\Omega \in \rG$. For this, our approach is based on a learning phase that involves computations on a dataset of available template geometries. We next summarize the main steps. In section \ref{sec:error-analysis} we give an error analysis of the procedure and discuss the main sources of inaccuracy. Some steps involve certain routines which are introduced at an abstract level in this section and in the error analysis. In section \ref{sec:practical-realization}, we explain how we have implemented them in practice, and how our theory justifies certain choices. Note however that since the procedure is general, other constructions can of course be considered for these building blocks.

\vspace{0.3cm}
\begin{boxblue}{\textbf{Training/Learning phase (offline)}}
\begin{itemize}
\item \textbf{Database of Template Geometries:} Gather a family  of $K$ template domains
$$
\rG_\templates = \{ \Omega_1,\dots, \Omega_K\} \subseteq \rG.
$$
This family will serve as a database for our subsequent developments.
\item \textbf{Database of Template Reduced Models:}  For every $\Omega \in \rG_\templates $, similarly as in section \ref{sec:SE-fixed-domain} we consider a parameter-dependent PDE
$$
\cP(u, y) = 0,
$$
where the parameters $y$ take values in $\rY$ and the solution $u(y)$ belongs to a Hilbert space $V(\Omega)$. Note that the differential operator $\cP$ and the parameter domain $\rY$ could vary with the geometry $\Omega$. However, to simplify the presentation, we assume that $\cP$ and $\rY$ are taken identical for all $\Omega \in \rG_\templates$. The set of solutions yields the solution manifold $\cM(\Omega)$ and it describes all the possible physical states of the system under consideration for the given geometry. We summarize the physics by precomputing a template linear subspace $V_n(\Omega)$,
$$
\cM(\Omega) \approx V_n(\Omega),\quad \forall \Omega \in \rG_\templates.
$$
\item \textbf{Transport snapshots and \new{linear subspaces} between geometries:} We need to define a map to transport functions between different geometries
  \begin{align*}
 \tau_{\Omega\to\Omega'}:  V(\Omega) \to V(\Omega'),\quad \forall (\Omega, \Omega') \in \rG \times \rG.
 \end{align*}
We also need to define a map to transport subspaces into subspaces. Since in general the image of a subspace $V_n(\Omega)$ by $\tau_{\Omega\to\Omega'}$ is not necessarily a subspace, we introduce another mapping
\begin{align*}
\widehat \tau_{\Omega\to\Omega'}: \{V_n(\Omega) \subseteq V(\Omega) \} \to \{V_{n'}(\Omega') \subseteq V(\Omega')\},\quad \forall (\Omega, \Omega') \in \rG \times \rG.
 \end{align*} 
\new{We assume in the following that $\widehat{\tau}_{\Omega \rightarrow \Omega'}$ is chosen such that $n' = n$.} Also, for some applications, it will be important that $\tau$ satisfies some physical properties such as mass conservation. We discuss how we have built $\tau$ and $\widehat \tau$ in practice in section \ref{sec:transport-routine}.
 \item \texttt{Best-Template}: For the reconstruction task, we need to identify for each new target geometry $\Omega \in \rG$ which template geometry $\Omega_t \in \rG_\templates $ has the most appropriate linear subspace $V_n(\Omega_t)$ that we have to transport to $\Omega$. For this, we need to build a best template map
\begin{align*}
\BT: \rG &\to \rG_\templates \\
\Omega &\mapsto \Omega^*_t.
\end{align*}
We discuss the different possibilities to build \BT~in section \ref{sec:best-template}.
\end{itemize}
\end{boxblue}
   
\begin{boxgreen}{\textbf{Reconstruction phase (online)}}
We are given a target domain $\Omega \in \rG$, and our goal is to give a fast reconstruction of an unknown function $u\in V(\Omega)$ given $m$ measurement observations $\bell(u) = \left( \ell_i(u)\right)_{i=1}^m$. Note that since $\ell_i \in V'(\Omega)$, the observation space depends on the geometry and $W=W(\Omega)$.
 \begin{itemize}
% \item Voxelize $\Omega$.
 \item If $\Omega \in \rG_\templates$ (the target geometry is in our template dataset), then we simply reconstruct with $A^\pbdw_{n,m}(P_{W(\Omega)} u)$ with the pre-computed linear subspace $V_n(\Omega)$.
 \item If $\Omega \not\in \rG_\templates$:
 \begin{itemize}
 \item We need to find an appropriate linear subspace for the reconstruction. For this, we apply the best-template mapping \BT~and we set
 $$
 \Omega^*_t = \BT(\Omega) \; \in \rG_\templates.
 $$
\item We transport the template linear subspace $V_n(\Omega^*_t)$ to $\Omega$ by applying $\widehat\tau_{\Omega^*_t \to \Omega}$, namely
$$
\widehat{V}_n(\Omega) = \widehat\tau_{\Omega^*_t \to \Omega}( V_n (\Omega^*_t) ).
$$
\item In $\Omega$, we reconstruct with PBDW using $\Wm(\Omega)$ and $\widehat{V}_n(\Omega)$.
 \end{itemize}
 \end{itemize}
 \end{boxgreen}

 \section{Theoretical analysis of the reconstruction error}
 \label{sec:error-analysis}
Suppose we are given a target geometry $\Omega_1\in \rG$ and that we want to reconstruct an unknown function $u\in \cM(\Omega_1)$ from its observations $\ell_i(u),\,i=1,\dots, m$. Suppose further that we fix a geometry $\Omega_0\in \rG_\templates$ and we transport the linear subspace space $V_n(\Omega_0)$ to the target geometry by applying  $\widehat \tau_{0\to1}(V(\Omega_0))$. The goal of this section is to give an error bound on the reconstruction of $u\in \cM(\Omega_1)$ with PBDW and using 
$$
\widehat{V_n}(\Omega_1) = \widehat\tau_{\Omega_0 \to \Omega_1}( V_n (\Omega_0) ),
$$
as a linear subspace on $\Omega_1$. \new{To ease the notation, we will use $\tau_{0\rightarrow 1}$ and $\hat{\tau}_{0\rightarrow 1}$ to denote $\tau_{\Omega_0 \to \Omega_1}$ and $\hat \tau_{\Omega_0 \to \Omega_1}$ respectively.}

The results involve the following notion of Hausdorff distance between compact sets.
\begin{definition}
For any two given compact sets $X$ and $Y$ of a Hilbert space $V$, the Hausdorff distance between $X$ and $Y$ is defined as
$$
d_H(X, Y) \coloneqq \max\{ \sup_{x\in X} \Vert x-P_Y x \Vert_V, \sup_{y\in Y} \Vert y - P_X y \Vert_V \}.
$$
\end{definition}

\subsection{An error bound based on $d_H\left( \tau_{0\to1}\new{ \(\cM(\Omega_0)\)}, \cM(\Omega_1) \right)$}
It is natural to expect that the reconstruction error will be of good quality if:
\begin{itemize}
\item the physical phenomena contained in the target manifold $\cM(\Omega_1) $ are well represented in some sense by the transported manifold $\tau_{0\to1}\new{\(\cM(\Omega_0)\)}$, and if
\item the \new{linear subspace} $V_n(\Omega_0)$ approximates \new{$\cM(\Omega_0)$} with enough accuracy, and its quality is not degraded by the transport to the target geometry.
\end{itemize}
Theorem \ref{thm:bound1} formalises and quantifies this intuition under the following assumptions:
\begin{enumerate}
\item In the template geometry $\Omega_0$, the accuracy of the template linear subspace $V_n(\Omega_0)$ is bounded by
\begin{equation}
\max_{u\in \cM(\Omega_0)} \Vert u - P_{V_n(\Omega)} u \Vert \leq \eps_0,
\tag{H1}
\label{eq:H1}
\end{equation}
for some $\eps_0\geq0$.
\item The Hausdorff distance between $\tau_{0\to1}\left(\cM(\Omega_0)\right)$ and $\cM(\Omega_1)$ is bounded by
\begin{equation}
d_H\left( \tau_{0\to1}\(\cM(\Omega_0)\), \cM(\Omega_1) \right) \leq \eta,
\tag{H2}
\label{eq:H2}
\end{equation}
for some $\eta\geq0$. Note that $d_H\left( \tau_{0\to1} \( \cM(\Omega_0) \), \cM(\Omega_1) \right)$ couples the physics, the geometry and the transport between $\Omega_0$ and $\Omega_1$. The bound on this term expresses the fact that the physics in the target domain $\Omega_1$, expressed via the manifold $\cM(\Omega_1)$, should be well represented when we transport the physics from $\Omega_0$ to $\Omega_1$. The value of $\eta$ could of course be large depending on the type of physics, geometries, and transport.
\item We finally need two technical assumptions on the transport maps $\tau_{0\to 1}$ and $\widehat \tau_{0\to 1}$:
\begin{enumerate}
\item $\tau_{0\to1}:V(\Omega_0)\to V(\Omega_1)$ is Hölder continuous, namely there exists $C>0$ and $\alpha>0$ such that
\begin{equation}
\Vert \tau_{0\to1}(f) - \tau_{0\to1}(g) \Vert_{V(\Omega_1)} \leq C \Vert f - g \Vert^\alpha_{V(\Omega_0)},\quad \forall (f, g)\in V(\Omega_0) \times V(\Omega_0).
\tag{H3}
\label{eq:H3}
\end{equation}
\item There exists $\gamma\geq 0$ such that
\begin{equation}
\sup_{v\in \tau_{0\to 1}(V_n(\Omega_0))} \frac{ \Vert v - P_{\widehat \tau_{0\to1} \new{(V_n(\Omega_0))}} v \Vert_{V(\Omega_1)} }{\Vert v \Vert_{V(\Omega_1)}} \leq \gamma .
\tag{H4}
\label{eq:H4}
\end{equation}
\end{enumerate}
\end{enumerate}

\begin{theorem}
\label{thm:bound1}
Let $\Omega_0\in \rG_\templates$ be a template geometry and let $u \in \cM(\Omega_1)$ be a target function to estimate from the observations $P_{\Wm(\Omega_1)}u$. If we reconstruct with PBDW using
$$
\widehat V_n(\Omega_1) = \widehat \tau_{0\to1} (V_n(\Omega_0)),
$$
then the reconstruction error is bounded by
\begin{equation}
\Vert u - A(P_{\Wm(\Omega_1)} u) \Vert_{V(\Omega_1)} \leq \frac{1}{\beta(\widehat V_n(\Omega_1), W_m(\Omega_1))} \Vert u - P_{\widehat V_n(\Omega_1)} u\Vert_{V(\Omega_1)}.
\label{eq:err-pbdw-Vhat}
\end{equation}
If the assumptions \eqref{eq:H1} to \eqref{eq:H4} hold, then the reconstruction error over the whole manifold $\cM(\Omega_1)$ is bounded by
\begin{equation}
\max_{u \in \cM(\Omega_1)} \Vert u - A(P_W u) \Vert_{V(\Omega_1)} \leq \frac{1}{\beta(\widehat V_n(\Omega_1), W_m(\Omega_1))}(\eta+ \max_{v\in \tau_{0\to1} \(\cM(\Omega_0)\) } \Vert v - P_{\widehat V_n(\Omega_1)} (v)\Vert_{V(\Omega_1)} ).
\label{eq:reconstruction-error-1}
\end{equation}
Suboptimal bounds for  $\max_{v\in \tau_{0\to1}\(\cM(\Omega_0)\) } \Vert v - P_{\widehat V_n(\Omega_1)} (v)\Vert_{V(\Omega_1)}$ are
\begin{align}
\max_{v\in \tau_{0\to1} \new{ \(\cM(\Omega_0)\) } } \Vert v - P_{\widehat V_n(\Omega_1)} (v)\Vert_{V(\Omega_1)}
&\leq
C\eps_0^\alpha+  \max_{v\in \tau_{0\to1} \( P_{V_n(\Omega_0)} \cM(\Omega_0) \)} \Vert v - P_{\widehat V_n(\Omega_1)} v\Vert_{V(\Omega_1)} \label{eq:subopt-1}\\
&\leq   C(\eps_0^\alpha+ \gamma\, \max_{u\in \cM(\Omega_0)}\Vert u \Vert_{V(\Omega_0)}^\alpha), \label{eq:subopt-2}
\end{align}
where the constant $C>0$ is the one given in assumption \eqref{eq:H3}.
\end{theorem}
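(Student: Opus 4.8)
The plan is to handle the four displayed inequalities in sequence, each one reducing to the best-approximation property of orthogonal projections combined with the triangle inequality. First, inequality \eqref{eq:err-pbdw-Vhat} is nothing but the generic PBDW bound \eqref{eq:pbdw_bound} specialized to the geometry $\Omega_1$ with reduced model $\widehat V_n(\Omega_1)$ in place of $V_n$; I would simply invoke that bound and, as in the second inequality of \eqref{eq:pbdw_bound}, drop the sharper projection onto $\widehat V_n(\Omega_1)\oplus(\Wm(\Omega_1)\cap\widehat V_n(\Omega_1)^\perp)$ in favour of $P_{\widehat V_n(\Omega_1)}$, which only enlarges the right-hand side. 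This is immediate from the earlier PBDW analysis.

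For \eqref{eq:reconstruction-error-1} I would take the supremum over $u\in\cM(\Omega_1)$ of \eqref{eq:err-pbdw-Vhat}. Since the stability constant $\beta(\widehat V_n(\Omega_1),\Wm(\Omega_1))$ does not depend on $u$, it factors out and the task reduces to bounding $\max_{u\in\cM(\Omega_1)}\Vert u - P_{\widehat V_n(\Omega_1)}u\Vert$. Here I would exploit \eqref{eq:H2}: the compactness of the two manifolds makes the nearest-point map well defined, so for each $u$ there is $v_u=P_{\tau_{0\to1}\cM(\Omega_0)}u\in\tau_{0\to1}\cM(\Omega_0)$ with $\Vert u-v_u\Vert\leq\eta$. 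Using that $P_{\widehat V_n(\Omega_1)}u$ is the best approximant of $u$ in $\widehat V_n(\Omega_1)$, I would compare it against the competitor $P_{\widehat V_n(\Omega_1)}v_u$ and conclude via the triangle inequality that $\Vert u-P_{\widehat V_n(\Omega_1)}u\Vert\leq\eta+\Vert v_u-P_{\widehat V_n(\Omega_1)}v_u\Vert$; taking the maximum over $u$ and bounding the second term by the maximum over all of $\tau_{0\to1}\cM(\Omega_0)$ then yields \eqref{eq:reconstruction-error-1}.

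The two suboptimal bounds follow the same pattern one layer deeper. Writing each $v\in\tau_{0\to1}\cM(\Omega_0)$ as $v=\tau_{0\to1}(u_0)$ with $u_0\in\cM(\Omega_0)$, I would use \eqref{eq:H1} together with the Hölder continuity \eqref{eq:H3} to see that $\Vert\tau_{0\to1}(u_0)-\tau_{0\to1}(P_{V_n(\Omega_0)}u_0)\Vert\leq C\eps_0^\alpha$; comparing $P_{\widehat V_n(\Omega_1)}v$ against $P_{\widehat V_n(\Omega_1)}\tau_{0\to1}(P_{V_n(\Omega_0)}u_0)$ by the same best-approximant argument then gives \eqref{eq:subopt-1}. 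For \eqref{eq:subopt-2} I would note that the surviving maximum ranges over $v'\in\tau_{0\to1}(V_n(\Omega_0))$, so \eqref{eq:H4} applies directly and bounds $\Vert v'-P_{\widehat V_n(\Omega_1)}v'\Vert$ by $\gamma\Vert v'\Vert$; it then remains to control $\Vert v'\Vert=\Vert\tau_{0\to1}(P_{V_n(\Omega_0)}u_0)\Vert$, which I would estimate once more through \eqref{eq:H3}, bounding it by $C\Vert P_{V_n(\Omega_0)}u_0\Vert^\alpha\leq C\max_{u\in\cM(\Omega_0)}\Vert u\Vert^\alpha$ and collecting constants.

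The steps are individually routine, so the main care is needed in two places. The first is the repeated use of the orthogonal projection as an optimal competitor: each time I must insert the right intermediate element (a nearest manifold point, or the transported projection $\tau_{0\to1}(P_{V_n(\Omega_0)}u_0)$) before applying the triangle inequality, and verify that only optimality, and not the contraction property, is being used. The second, and genuinely delicate, point is the final estimate $\Vert\tau_{0\to1}(P_{V_n(\Omega_0)}u_0)\Vert\leq C\Vert P_{V_n(\Omega_0)}u_0\Vert^\alpha$: assumption \eqref{eq:H3} only controls differences, so this clean form tacitly requires $\tau_{0\to1}(0)=0$ (harmless when $\tau_{0\to1}$ is linear, as in our practical construction) or otherwise an additive constant absorbing $\Vert\tau_{0\to1}(0)\Vert$. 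I would flag this hypothesis explicitly, since it is the only nontrivial ingredient that the four ``best-approximation plus triangle inequality'' reductions do not supply on their own.
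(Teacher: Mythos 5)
Your proof follows essentially the same route as the paper's: the generic PBDW bound \eqref{eq:pbdw_bound} for \eqref{eq:err-pbdw-Vhat}, insertion of a nearest point of $\tau_{0\to1}\cM(\Omega_0)$ plus the triangle inequality for \eqref{eq:reconstruction-error-1} (the paper uses linearity and contractivity of $I-P_{\widehat V_n(\Omega_1)}$ where you use the best-approximation property, an immaterial difference), and the intermediate element $\tau_{0\to1}(P_{V_n(\Omega_0)}u_0)$ combined with \eqref{eq:H1}, \eqref{eq:H3} and \eqref{eq:H4} for the two suboptimal bounds. Your observation that the final estimate $\Vert\tau_{0\to1}(P_{V_n(\Omega_0)}u_0)\Vert_{V(\Omega_1)}\leq C\Vert P_{V_n(\Omega_0)}u_0\Vert_{V(\Omega_0)}^\alpha$ tacitly requires $\tau_{0\to1}(0)=0$ (or an extra additive constant) is well taken: the paper's proof uses exactly this step without stating that hypothesis.
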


\begin{proof}
In this proof, all norms will be related to the space $V(\Omega_1)$ defined on the target geometry $\Omega_1$. Let $u\in \cM(\Omega_1)$. By \eqref{eq:pbdw_bound}, we have
\begin{equation}
\Vert u - A(\new{P_{W_m}} u) \Vert \leq \frac{1}{\beta(\widehat V_n(\Omega_1), \new{{W_m}})} \Vert u - P_{\widehat V_n(\Omega_1)} u\Vert,
\label{eq:err-pbdw-proof}
\end{equation}
which is the first inequality of the Theorem. We next bound $\Vert u - P_{\widehat V_n(\Omega_1)} u\Vert$ in terms of quantities in the template geometry $\Omega_0$ and the transport operators $\tau_{0\to 1}$ and $\widehat \tau_{0\to 1}$. For this, let
$$
u_1 \in \arginf_{v\in \tau_{0\to1}(\cM(\Omega_0)) } \Vert u - v \Vert,
$$
and remark that
\begin{equation}
\Vert u - u_1 \Vert \leq d_H(\tau_{0\to1}(\cM(\Omega_0)), \cM(\Omega_1))\leq \eta,
\label{eq:tilde-u}
\end{equation}
by assumption \eqref{eq:H2}.

By the triangle inequality and inequality \eqref{eq:tilde-u},
\begin{align}
\Vert u - P_{\widehat V_n(\Omega_1)} u\Vert
&\leq \Vert u - u_1 - P_{\widehat V_n(\Omega_1)}(u-u_1) \Vert + \Vert u_1 -P_{\widehat V_n(\Omega_1)} u_1 \Vert \\
&\leq \eta + \Vert u_1 -P_{\widehat V_n(\Omega_1)} u_1 \Vert \\
&\leq \eta + \max_{v\in \tau_{0\to1}\(\cM(\Omega_0)\)} \Vert v -P_{\widehat V_n(\Omega_1)} v \Vert,
\label{eq:bound-proj-err}
\end{align}
and the error bound \eqref{eq:reconstruction-error-1} follows by inserting \eqref{eq:bound-proj-err} into \eqref{eq:err-pbdw-proof}.

We next bound $\max_{v\in \tau_{0\to1} \(\cM(\Omega_0)\)} \Vert v -P_{\widehat V_n(\Omega_1)} v \Vert$ as follows. For any $u_1 \in \tau_{0\to1} \(\cM(\Omega_0)\)$, there exists $u_0\in \cM(\Omega_0)$ such that $u_1 = \tau_{0\to1}(u_0)$. Therefore,
\begin{align*}
\Vert u_1 -P_{\widehat V_n(\Omega_1)} u_1 \Vert
& = \Vert \tau_{0\to1}(u_0) -  P_{\widehat V_n(\Omega_1)} \left( \tau_{0\to1}(u_0)\right) \Vert \\
& \leq \Vert \tau_{0\to1}(u_0) -  \tau_{0\to1}(P_{V_n(\Omega_0)}u_0) - P_{\widehat V_n(\Omega_1)}\left[  \tau_{0\to1}(u_0) -  \tau_{0\to1}(P_{V_n(\Omega_0)}u_0) \right] \Vert\\
&\quad+
\Vert \tau_{0\to1}(P_{V_n(\Omega_0)}u_0)  - P_{\widehat V_n(\Omega_1)} \left(\tau_{0\to1}(P_{V_n(\Omega_0)}u_0) \right)\Vert,
\end{align*}
where we have added and subtracted $ \tau_{0\to1}(P_{V_n(\Omega_0)}u_0)\new{-}P_{\widehat V_n(\Omega_1)}\left( \tau_{0\to1}(P_{V_n(\Omega_0)}u_0)\right)$, and applied the triangle inequality. By applying hypotheses \eqref{eq:H3} and \eqref{eq:H1}, we can further bound the above inequality as
\begin{align}
\Vert u_1 -P_{\widehat V_n(\Omega_1)} u_1 \Vert
& \leq \Vert \tau_{0\to1}(u_0) -  \tau_{0\to1}(P_{V_n(\Omega_0)}u_0) \Vert
+ \max_{v\in \tau_{0\to1}\( P_{V_n(\Omega_0)}\cM(\Omega_0) \) }\Vert v - P_{\widehat V_n(\Omega_1)} v\Vert\\
&\leq C\eps_0^\alpha + \max_{v\in \tau_{0\to1} \( P_{V_n(\Omega_0)}\cM(\Omega_0) \) }\Vert v - P_{\widehat V_n(\Omega_1)} v\Vert,
\label{eq:last-ineq}
\end{align}
which yields inequality \eqref{eq:subopt-1}. Inequality \eqref{eq:subopt-2} follows from using \eqref{eq:H3} \new{and \eqref{eq:H4}} to bound  $\max_{v\in \tau_{0\to1}\(P_{V_n(\Omega_0)}\cM(\Omega_0) \) }\Vert v - P_{\widehat V_n(\Omega_1)} v\Vert$ in \eqref{eq:last-ineq}. Note that both inequalities \eqref{eq:subopt-1} and \eqref{eq:subopt-2} are suboptimal due to the construction of the bounds.

%\begin{align}
%\Vert u_1 -P_{\widehat V_n(\Omega_1)} u_1 \Vert
%& = \Vert \tau_{0\to1}(u_0) -  P_{\widehat V_n(\Omega_1)} \left( \tau_{0\to1}(u_0)\right) \Vert \\
%& \leq \Vert \tau_{0\to1}(u_0) -  \tau_{0\to1}(P_{V_n(\Omega_0)}u_0) \Vert
%+ \gamma
%\Vert \tau_{0\to1}(P_{V_n(\Omega_0)}u_0)  \Vert, \qquad \text{by \eqref{eq:H4}} \\
%&\leq C \left( \Vert u_0 - P_{V_n(\Omega_0)}u_0 \Vert^\alpha + \gamma \Vert u_0 \Vert^\alpha\right), \qquad \text{by \eqref{eq:H3}} \\
%&\leq C\left( \eps_0^\alpha + \gamma \max_{u\in \cM(\Omega_0)} \Vert u \Vert^\alpha\right) , \qquad \text{by \eqref{eq:H1}}
%\label{eq:last-ineq}
%\end{align}
%The result follows by inserting \eqref{eq:last-ineq} into \eqref{eq:bound-proj-err}, and then inserting \eqref{eq:bound-proj-err} into \eqref{eq:err-pbdw-proof}.
\end{proof}
Theorem \ref{thm:bound1} shows that several ingredients are required in order to obtain a good quality reconstruction in $\Omega_1$ from a template geometry $\Omega_0$:
\begin{itemize}
\item The quality of the reduced basis $V_n(\Omega_0)$ in $\Omega_0$ must be high so that $\eps_0$ is small enough.
\item The transported manifold $\tau_{0\to1}\(\cM(\Omega_0)\)$ must be close the target manifold $\cM(\Omega_1)$ is the sense that $\eta$ is small enough.
\item The transported space $\widehat V_n(\Omega_1)=\widehat \tau_{0\to 1}(V_n(\Omega_0))$ must have ``a good alignment'' with the observation space $W\new{_m}$ in the sense that the stability constant $\beta(\widehat V_n(\Omega_1), W\new{_{m}})$ is bounded away from 0.
\item Finally, the transport of the space $V_n(\Omega_0)$ with $\widehat \tau_{0\to1}$ must approximate as well as possible the one with $\tau_{0\to1}$ so that $\gamma$ is small. 
\end{itemize}

\subsection{An alternative error bound based on subspace distances}
The reconstruction error bound \eqref{eq:reconstruction-error-1} given in Theorem \ref{thm:bound1} involves very natural quantities such as the Hausdorff distance between the target manifold $\cM(\Omega_1)$ and the transported one $\tau_{0\to1}\new{\(\cM(\Omega_0)\)}$. The bound \eqref{eq:reconstruction-error-1} may however be pessimistic in the sense that if $d_H( \tau_{0\to1} \(\cM(\Omega_0)\), \cM(\Omega_1))$ is large, then the bound will not guarantee a high quality (because $\eta$ is large). In this scenario, the reconstruction may however still be of decent quality if the transported subspace $\widehat \tau_{0\to1} (V_n(\Omega_0))$ does not deviate much compared to good quality reduced subspaces $V_n(\Omega_1)$ that one could compute in the target manifold $\cM(\Omega_1)$.

Theorem \ref{thm:bound2} quantifies this argument. It is a perturbative result that expresses to what extent the reconstruction is degraded between working directly with a linear subspace $V_n(\Omega_1)$ and a transported subspace $\widehat V_n(\Omega_1) = \widehat \tau_{0\to1} (V_n(\Omega_0))$. The result involves the Hausdorff distance between the unit spheres of these two spaces, which we denote by $\bS(V_n(\Omega_1))$ and $\bS(\widehat V_n(\Omega_1))$. The square of this distance can be written as
\begin{equation}
\begin{aligned}
d^2_H(\bS(\widehat V_n(\Omega_1)), \bS(V_n(\Omega_1)))
&= \max \left( 
\max_{\hat v\in \widehat V_n(\Omega_1) }  \frac{\Vert \hat v - P_{V_n(\Omega_1)} \hat v \Vert^2}{\Vert \hat v\Vert^2} ;  
\max_{v\in V_n(\Omega_1) }  \frac{\Vert v - P_{\widehat{V}_n(\Omega_1)} v \Vert^2}{\Vert v\Vert^2}
\right) \\
&= \max \left( 
1 - \beta^2(\widehat V_n, V_n);\,
1 - \beta^2(V_n, \widehat V_n)
\right) \\
&= 
1 - \min\left( \beta^2(\widehat V_n, V_n); \beta^2(V_n, \widehat V_n) \right).
\label{eq:hausdorff_sphere}
\end{aligned}
\end{equation}
\new{where the Pythagorean identity $\Vert v \Vert^2 = \Vert P_{V_n(\Omega_1)} v \Vert^2 + \Vert v - P_{V_n(\Omega_1)} v \Vert^2$ ($\forall v \in V_n(\Omega_1)$) has been used}.

\begin{theorem}
\label{thm:bound2}
Let $V_n(\Omega_1)$ be a linear subspace such that
\begin{align}
\max_{u \in \cM(\Omega_1)} \Vert u - P_{V_n(\Omega_1)} u \Vert &\leq \eps, \label{eq:hyp-b}\\
 \beta(V_n(\Omega_1), W) &\geq \underline{\beta} > 0. \label{eq:hyp-beta}
\end{align}
Let \new{$\widehat V_n(\Omega_1) = \widehat \tau_{0\to1} (V_n(\Omega_0))$} be a transported subspace from $\Omega_0$ to $\Omega_1$ such that
\begin{equation}
\label{eq:hypoth-dh}
d_H(\bS(\widehat V_n(\Omega_1)), \bS(V_n(\Omega_1))) \leq \delta_H.
\end{equation}
Then the reconstruction of $\cM(\Omega_1)$ with PBDW using $V_n(\Omega_1)$ is well-posed and the error is bounded by
$$
\max_{u\in \cM(\Omega_1)} \Vert u - A_{V_n(\Omega_1)} (P_W u) \Vert \leq \frac{\eps}{\underline{\beta}}.
$$
If we use $\widehat V_n(\Omega_1)$, the reconstruction is well posed if and only if
\begin{equation}
\delta_H < \underline{\beta},
\label{eq:delta-bound}
\end{equation}
and the reconstruction error is bounded by
\begin{align}
\label{eq:thm-2-main-result}
&\max_{u\in \cM(\Omega_1)} \Vert u - A_{\widehat V_n(\Omega_1)} (P_W u) \Vert \leq \frac{ \eps + 2\delta_H \max_{u\in \cM(\Omega_1)} \Vert P_{V_n+\widehat V_n} u \Vert }{\underline{\beta} (1-\delta_H/\underline{\beta})^{1/2}((2+\delta_H)/\underline{\beta}-1)^{1/2}}.
\end{align}
\end{theorem}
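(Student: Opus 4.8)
The plan is to handle the two reduced models separately and to view the transported model $\widehat V_n \coloneqq \widehat V_n(\Omega_1)$ as a perturbation of $V_n \coloneqq V_n(\Omega_1)$, the perturbation size being measured by $\delta_H$ through the identity \eqref{eq:hausdorff_sphere}. The statement for $V_n$ is immediate: since $\beta(V_n, W) \ge \underline{\beta} > 0$ by \eqref{eq:hyp-beta}, the PBDW problem is well posed, and the a priori bound \eqref{eq:pbdw_bound} together with \eqref{eq:hyp-b} gives $\Vert u - A_{V_n}(P_W u)\Vert \le \beta(V_n,W)^{-1}\Vert u - P_{V_n}u\Vert \le \eps/\underline{\beta}$ uniformly over $\cM(\Omega_1)$.

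For the transported model I would first record the consequence of \eqref{eq:hypoth-dh}: by \eqref{eq:hausdorff_sphere}, $\delta_H^2 \ge 1 - \beta^2(\widehat V_n, V_n)$ and $\delta_H^2 \ge 1 - \beta^2(V_n, \widehat V_n)$, so every unit vector of one space is within distance $\delta_H$ of the other space, equivalently $\Vert P_{V_n} - P_{\widehat V_n}\Vert \le \delta_H$ since both have the same dimension. Then I would proceed in two steps. \textbf{(i) Stability:} for a unit $\hat v \in \widehat V_n$ I would compare it with $v = P_{V_n}\hat v$ and use $\Vert P_W v\Vert \ge \underline{\beta}\Vert v\Vert$ from \eqref{eq:hyp-beta} together with $\Vert \hat v - v\Vert \le \delta_H$ to bound $\Vert P_W \hat v\Vert$ from below, giving a positive lower bound on $\beta(\widehat V_n,W)$ once $\delta_H$ is below a threshold; this is the origin of the well-posedness condition \eqref{eq:delta-bound}. \textbf{(ii) Approximation:} writing $(P_{V_n}-P_{\widehat V_n})u = (P_{V_n}-P_{\widehat V_n})P_{V_n+\widehat V_n}u$, the triangle inequality and the gap estimate give
\[
\Vert u - P_{\widehat V_n}u\Vert \le \Vert u - P_{V_n}u\Vert + \Vert (P_{V_n}-P_{\widehat V_n})u\Vert \le \eps + \delta_H\Vert P_{V_n+\widehat V_n}u\Vert.
\]
Feeding these into \eqref{eq:pbdw_bound} yields a bound of exactly the shape of \eqref{eq:thm-2-main-result}: a projection error of the form $\eps + O(\delta_H)\Vert P_{V_n+\widehat V_n}u\Vert$ divided by a stability constant that degrades as $\delta_H \uparrow \underline{\beta}$.

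The hard part is to recover the \emph{precise} constants. Bounding $\beta(\widehat V_n,W)$ and $\Vert u-P_{\widehat V_n}u\Vert$ separately by crude triangle inequalities already gives a clean valid bound such as $(\eps + \delta_H\Vert P_{V_n+\widehat V_n}u\Vert)/(\underline{\beta}-\delta_H)$, but this is not the stated one. Indeed the announced denominator simplifies to $\sqrt{(\underline{\beta}-\delta_H)(2+\delta_H-\underline{\beta})}$, which is strictly larger than the worst possible value of $\beta(\widehat V_n,W)$: already for one-dimensional subspaces the angle between $\widehat V_n$ and $V_n$ can add to the angle between $V_n$ and $W$, so $\beta(\widehat V_n,W)$ can be strictly smaller than this quantity. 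Hence the denominator is \emph{not} a mere lower bound on $\beta(\widehat V_n,W)$, and the estimate cannot be obtained by independently controlling stability and approximation.

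I would therefore expect the sharp route to compare the two reconstructions directly. Since $A_{V_n}(P_W u)$ and $A_{\widehat V_n}(P_W u)$ both lie in the affine set $P_W u + W^\perp$, their difference lies in $W^\perp$ and is governed by the optimality characterizations $(I-P_{V_n})A_{V_n}(P_W u)\in W$ and $(I-P_{\widehat V_n})A_{\widehat V_n}(P_W u)\in W$; tracking the resulting cross terms simultaneously through \eqref{eq:hausdorff_sphere}, rather than via a blunt triangle inequality, is what should produce both the factor $2\delta_H$ in the numerator and the exact denominator, as well as the sharp threshold $\delta_H<\underline{\beta}$ in \eqref{eq:delta-bound}. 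Finally, the necessity (the ``only if'') in \eqref{eq:delta-bound} I would obtain by exhibiting an alignment of $V_n$, $\widehat V_n$ and $W$ that saturates the stability estimate, forcing $\beta(\widehat V_n,W)\to 0$ as $\delta_H \uparrow \underline{\beta}$.
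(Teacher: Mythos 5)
Your treatment of the $V_n(\Omega_1)$ part is correct and identical to the paper's. For the transported space, your proposal diverges from the paper exactly at the point where you stop proving and start speculating, and that is where the gap lies. The paper does precisely the ``separate control'' of stability and approximation that you dismiss as incapable of producing the stated constants: it bounds the approximation error by $\Vert u - P_{\widehat V_n}u\Vert \le \eps + 2\delta_H\Vert P_{V_n+\widehat V_n}u\Vert$ (your variant with $\delta_H$ via $\Vert P_{V_n}-P_{\widehat V_n}\Vert\le \delta_H$ is in fact sharper), and it bounds the stability constant from below by $\beta^2(\widehat V_n, W) \ge 1-(1-\underline{\beta}+\delta_H)^2 = (\underline{\beta}-\delta_H)(2+\delta_H-\underline{\beta})$, whose square root is exactly the stated denominator. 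The ingredient missing from your sketch is this last inequality; the paper obtains it by taking $\hat v\in\widehat V_n$, $v=P_{V_n}\hat v$, writing the Young-type splitting
\begin{equation}
\Vert\hat v - P_W\hat v\Vert^2 \le (1+\zeta)\,\Vert(\hat v - v) - P_W(\hat v-v)\Vert^2 + (1+\zeta^{-1})\,\Vert v - P_Wv\Vert^2 \le \left[(1+\zeta)\delta_H^2 + (1+\zeta^{-1})(1-\underline{\beta}^2)\right]\Vert\hat v\Vert^2,
\end{equation}
and minimizing over $\zeta>0$. Your alternative route---comparing $A_{V_n}$ and $A_{\widehat V_n}$ directly through their optimality conditions---is never carried out, so neither the exact denominator nor the necessity of $\delta_H<\underline{\beta}$ is established in your proposal; as written it yields only a bound of the form $(\eps+\delta_H\Vert P_{V_n+\widehat V_n}u\Vert)/\beta(\widehat V_n,W)$ with an unquantified denominator.

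That said, your observation that the stated denominator ``is strictly larger than the worst possible value of $\beta(\widehat V_n,W)$'' is correct, and it exposes a genuine slip in the paper's computation rather than a hidden, cleverer mechanism. The minimum over $\zeta>0$ of $(1+\zeta)\delta_H^2+(1+\zeta^{-1})(1-\underline{\beta}^2)$ is $(\delta_H+\sqrt{1-\underline{\beta}^2})^2$, not $(1-\underline{\beta}+\delta_H)^2$; the two agree only when $\underline{\beta}=0$. Concretely, for one-dimensional spaces with $\underline{\beta}=0.6$ and $\delta_H=0.5$, the worst alignment gives $\beta(\widehat V_n,W)=\cos(\arccos 0.6+\arcsin 0.5)\approx 0.12$, below the claimed lower bound $\sqrt{0.19}\approx 0.44$. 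The corrected statement is $\beta^2(\widehat V_n,W)\ge 1-(\delta_H+\sqrt{1-\underline{\beta}^2})^2$, which gives well-posedness under the stronger condition $\delta_H<1-\sqrt{1-\underline{\beta}^2}\le\underline{\beta}$ and a correspondingly larger constant in \eqref{eq:thm-2-main-result}. In short: your plan is essentially the paper's plan; your skepticism about the constants is justified; but the conclusion you drew from it (that a structurally different proof must be at work) is not---what is needed is the explicit $\zeta$-splitting above, with the minimization evaluated correctly.
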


\begin{proof}
Let $u \in \cM(\Omega_1)$. By direct application of \eqref{eq:err-pbdw-Vhat}, we have
\begin{equation}
\Vert u - A_{\widehat V_n(\Omega_1)} (P_W u) \Vert \leq \frac{1}{\beta(\widehat V_n(\Omega_1), W)} \Vert u - P_{\widehat V_n(\Omega_1)} u\Vert.
\label{eq:err-pbdw-proof-bis}
\end{equation}
By the triangle inequality and hypothesis \eqref{eq:hyp-b} and \eqref{eq:hypoth-dh},
\begin{equation}
\label{eq:thm-2-tmp-0}
\Vert u - P_{\widehat V_n(\Omega_1)} u\Vert \leq \Vert u - P_{V_n(\Omega_1)} u\Vert + \Vert P_{V_n(\Omega_1)} u - P_{\widehat V_n(\Omega_1)} u\Vert \leq \eps + 2\delta_H \Vert P_{V_n+\widehat V_n} u \Vert .
\end{equation}
We next prove that
\begin{equation}
\label{eq:ineq-beta-thm-2}
\beta^2(\widehat V_n, W) \geq 1- (1-\underline{\beta}+\delta_H)^2 = \underline{\beta}^2 (1-\delta_H/\underline{\beta})((2+\delta_H)/\underline{\beta}-1).
\end{equation}
Note that this automatically guarantees that the reconstruction using $\widehat V_n(\Omega_1)$ is well-posed since, by hypothesis \eqref{eq:delta-bound}, we have $\delta_H<\underline{\beta}$ and therefore $\beta(\widehat V_n, W) >0$.

To prove \eqref{eq:ineq-beta-thm-2}, we start from the fact that
$$
\beta^2(\widehat V_n, W) = 1 - \max_{\hat v \in \widehat V_n} \frac{\Vert \hat v - P_W \hat v \Vert^2}{\Vert \hat v \Vert^2},
$$
and, by Jensen's inequality, we have that for any $\zeta>0$ 
\begin{equation}
\label{eq:tmp-thm-2}
\frac{\Vert \hat v - P_W \hat v \Vert^2}{\Vert \hat v \Vert^2}
\leq
(1+\zeta) \frac{ \Vert \hat v - v - P_W(\hat v - v)\Vert^2 }{ \Vert \hat v \Vert^2 }
+
(1+\zeta^{-1}) \frac{\Vert v - P_W v \Vert^2}{ \Vert \hat v \Vert^2 },\quad \forall v \in V_n .
\end{equation}
Now, on the one hand,
\begin{equation}
\label{eq:aux-1-thm-2}
\frac{ \Vert \hat v - v - P_W(\hat v - v)\Vert^2 }{ \Vert \hat v \Vert^2 } \leq \frac{ \Vert \hat v - v \Vert^2 }{ \Vert \hat v \Vert^2 }.
\end{equation}
On the other hand,
\begin{align}
\label{eq:aux-2-thm-2}
 \frac{\Vert v - P_W v \Vert^2}{ \Vert \hat v \Vert^2 }
&\leq  \frac{\Vert v \Vert^2}{\Vert \hat v \Vert^2} \max_{v\in V_n}   \frac{\Vert v - P_W v \Vert^2}{ \Vert v \Vert^2 }
\leq \frac{\Vert v \Vert^2}{\Vert \hat v \Vert^2}  (1-\underline{\beta}^2),\quad \forall v \in V_n,
\end{align}
where we have used \eqref{eq:hyp-beta} to derive the last inequality. 
Thus inserting bounds \eqref{eq:aux-1-thm-2} and \eqref{eq:aux-2-thm-2} into \eqref{eq:tmp-thm-2}, and setting $v=P_{V_n} \hat v$, we derive
\begin{align}
\frac{\Vert \hat v - P_W \hat v \Vert^2}{\Vert \hat v \Vert^2}
&\leq
(1+\zeta) \frac{\Vert \hat v - P_{V_n} \hat v \Vert^2}{\Vert \hat v \Vert^2}
+
(1+\zeta^{-1}) (1-\underline{\beta}^2) \frac{\Vert P_{V_n} \hat v \Vert}{\Vert \hat v \Vert} \\
&\leq (1+\zeta)  \max_{\hat v \in \widehat V_n}\frac{\Vert \hat v - P_{V_n} \hat v \Vert^2}{\Vert \hat v \Vert^2}
+ (1+\zeta^{-1}) (1-\underline{\beta}^2) \\
&\leq (1+\zeta) \delta_H^2
+ (1+\zeta^{-1}) (1-\underline{\beta}^2) ,\quad \forall \hat v \in \widehat V_n, \; \forall \zeta>0.
\end{align}
We can maximize the left-hand side over $\hat v \in \widehat V_n$ and minimize the right-hand side over $\zeta >0$. This yields
\begin{align}
1 - \beta^2(\widehat V_n, W)
&= \max_{\hat v \in \widehat V_n} \frac{\Vert \hat v - P_W \hat v \Vert^2}{\Vert \hat v \Vert^2} \\
&\leq \min_{\zeta>0} (1+\zeta) \delta_H^2 + (1+\zeta^{-1}) (1-\underline{\beta}^2) \\
&= (1-\underline{\beta}+\delta_H)^2,
\end{align}
which is the proof to inequality \eqref{eq:ineq-beta-thm-2}. We derive the final result \eqref{eq:thm-2-main-result} by inserting bounds \eqref{eq:ineq-beta-thm-2} and \eqref{eq:thm-2-tmp-0} into \eqref{eq:err-pbdw-proof}.
\end{proof}
From the error bound \eqref{eq:thm-2-main-result} from Theorem \ref{thm:bound2}, it follows that if the transported subspace $\widehat{V}_n(\Omega_1)$ deviates from $V_n(\Omega_1)$ by a quantity of the order $\delta_H \leq \eps / \max_{u\in \cM(\Omega_1)} \Vert u \Vert$, then
$$
\max_{u\in \cM(\Omega_1)} \Vert u - A_{\widehat V_n(\Omega_1)} (P_W u) \Vert \leq C \frac{\eps}{\underline{\beta}},
$$
for a relatively moderate constant $C\geq 1$. In this scenario, the reconstruction with the transported subspace is of the same quality as the one with the linear subspace $V_n(\Omega_1)$ (which we are avoiding to compute in order to speed-up the state estimation procedure).

\section{Transport routine $\tau$ and the routine \texttt{Best-Template}}
\label{sec:practical-realization}

\subsection{Computation of $\tau_{\Omega\to\Omega'}$ and $\widehat \tau_{\Omega\to\Omega'}$}
\label{sec:transport-routine}
We next describe a practical way of mapping snapshots and subspaces from a given geometry $\Omega_0$ to a target geometry $\Omega_1$. Our approach is based on building a one-to-one mapping between the two volumes $\Omega_0$ and $\Omega_1$. It involves three steps:
\begin{enumerate}
\item \emph{Surface matching:} The task is to compute a map between $\partial \Omega_0$ and $\partial \Omega_1$. For this, we use the so-called Large Deformation Diffeomorphic Metric Mapping (LDDMM, see for instance \cite{lddmm}) method. In practice, the output of this method is an invertible and smooth mapping $T_{(\text{LDDMM})}: \partial \Omega_0\to \partial \Omega_1'$ between $\partial \Omega_0$ and an intermediate surface $\partial \Omega_1'$ which is close to the target surface $\partial \Omega_1$. The mapping is such that, if $\partial\Omega_0=\partial\Omega_1$, then $T_{(\text{LDDMM})}(x) = x,\,\forall x \in \partial\Omega_0$. The surface misfit between $\partial\Omega'_1$ and $\partial\Omega_1$ is corrected in step 3 with an interpolation post-processing.
\item \emph{Extrapolation of the surface map to the entire volume:} We make a harmonic extension on $\Omega_0$ and we find a displacement field $d_0\in H^1(\Omega_0)^d$ such that
\begin{align}
\Delta d_0 &= 0, \quad \text{ in }\Omega_0\\
d_0(x) &= T_{(\text{LDDMM})}(x)-x, \quad \forall x \in \partial \Omega_0.
\label{eq:lddmm_extension}
\end{align}
Note that $d_0 = 0$ if $\Omega_0=\Omega_1$. We define the volumetric mapping
\begin{align}
T_{0\to 1'} :\Omega_0 &\to \Omega_1'  \\
x_0 & \mapsto x_1 = T_{0\to 1'}(x_0)\coloneqq x_0 + d_0(x_0).
\end{align}
This map is invertible and $T^{-1}_{0\to 1'} = T_{1'\to 0}$. We further define the functional mapping
\begin{align}
\phi_{0\to 1'} : V(\Omega_0) &\to V(\Omega_1')  \\
f & \mapsto \phi_{0\to 1'} (f)(x'_1) \coloneqq f \circ T_{1'\to0}(x'_1),\quad \forall x'_1\in \Omega_1'.
\end{align}
\item \emph{Interpolation:} Since in general $\Omega_1' \neq \Omega_1$, we add an interpolation operator  $\cI_{1'\to 1}: V(\Omega_1') \mapsto V(\Omega_1)$, so that the final mapping is
\begin{align}
\tau_{0\to 1} : V(\Omega_0) &\to V(\Omega_1)  \\
f & \mapsto \tau_{0\to 1} (f) \coloneqq \cI_{1' \to 1} \( \phi_{0\to 1'}(f) \).
\end{align}
\end{enumerate}
Note that the map $\tau_{0\to 1}$ may not exist if the spaces $V(\Omega_0)$ and $V(\Omega_1)$ are chosen of very different nature (very different regularity) or if certain physical quantities need to be preserved. One relevant example for fluid and biomedical applications is the space of divergence free fields where $V(\Omega_0)=H(\text{div}, \Omega_0)$ and $V(\Omega_1)=H(\text{div}, \Omega_1)$. In this case, for any $f\in H(\text{div}, \Omega_0)$, we have $\tau_{0\to 1}f \in H^1(\Omega_1)$ but the function may not be divergence free. One remedy in this case is to add a post-process with the Piola transform. We therefore update the abstract definition of $\tau_{0\to 1}$ by adding a post-process mapping $p$ to allow this type of scenario,
\begin{align}
\tau_{0\to 1} : V(\Omega_0) &\to V(\Omega_1)  \\
f & \mapsto \tau_{0\to 1} (f) \coloneqq p \circ  \cI_{1' \to 1} \( \phi_{0\to 1'}(f) \).
\label{eq:post_process_map}
\end{align}

In our reconstruction method, we need to transport subspaces $V_n(\Omega_0) \subseteq V(\Omega_0)$ to subspaces of $V(\Omega_1)$. Note that in general the image of $V_n(\Omega_0) $ by $\tau_{0\to1}$, defined as
$$
\tau_{0\to 1}(V_n(\Omega_0)) \coloneqq \{ \tau_{0\to 1}(v) \in V(\Omega_1) \cond v\in V(\Omega_0) \},
$$
is not a linear subspace of  $V(\Omega_1)$ unless $\tau_{0\to1}$ is a linear map. Due to this, given that in our approach we need to map subspaces into subspaces, we choose to define the image of $V_n(\Omega_0)$ with respect to a given basis $\cB=\vspan\{\varphi_1,\dots, \varphi_n\}$ of $V_n(\Omega_0)$ as
\begin{align}
\widehat \tau_{0\to1} (\new{V_n(\Omega_0)}, \cB) \coloneqq \vspan\{ \tau_{0\to 1}(\varphi_1),\dots, \tau_{0\to 1}(\varphi_n)  \}.
\end{align}

\subsection{The  \texttt{Best-Template} routine \BT}
\label{sec:best-template}
The goal of this routine is to identify for each new target geometry $\Omega \in \rG$ which template geometry $\Omega_t \in \rG_\templates $ has the most appropriate linear subspace $V_n(\Omega_t)$ that we have to transport to $\Omega$.

Given a target geometry $\Omega \in \rG$ and a template geometry $\Omega_t \in \rG_\templates$, the reconstruction error is bounded by (see \eqref{eq:err-pbdw-Vhat})
\begin{equation}
\max_{u \in \cM(\Omega)}\Vert u - A_{\widehat\tau_{\Omega_t \to \Omega}( V_n(\Omega_t))}(\new{P_{W_m}} u) \Vert \leq \frac{1}{\beta(\widehat\tau_{\Omega_t \to \Omega} (V_n(\Omega_t)),\new{W_m})} \delta^{(\wc)}_{\Omega_t \to \Omega},
\label{eq:err-pbdw-Vhat-bis}
\end{equation}
where
\begin{equation}
\label{eq:err-fwd-wc}
\delta^{(\wc)}_{\Omega_t \to \Omega} \coloneqq \max_{u \in \cM(\Omega) } \Vert u - P_{ \widehat \tau_{\Omega_t \to \Omega}(V_n(\Omega_t))} u \Vert ,\quad \forall\; \Omega_t \in \rG_\templates .
\end{equation}
Alternatively, if we study errors in the average sense,
\begin{equation}
\bE( \Vert u - A_{\widehat\tau_{\Omega_t \to \Omega} (V_n(\Omega_t))}(\new{P_{W_m}} u) \Vert^2 ) \leq \frac{1}{\beta^2(\widehat\tau_{\Omega_t \to \Omega} (V_n(\Omega_t)), \new{W_m})} \(\delta^{(\ms)}_{\Omega_t \to \Omega}\)^2,
\label{eq:err-pbdw-Vhat-ms}
\end{equation}
with
\begin{equation}
\label{eq:err-fwd}
\delta^{(\ms)}_{\Omega_t \to \Omega} \coloneqq \bE( \Vert u - P_{ \widehat \tau_{\Omega_t \to \Omega}(V_n(\Omega_t))} u \Vert^2 )^{1/2},\quad \forall\; \Omega_t \in \rG_\templates .
\end{equation}
Ideally, we would like to find the template $\Omega_t$ that miminizes the upper bound \eqref{eq:err-pbdw-Vhat-bis} or \eqref{eq:err-pbdw-Vhat-ms}, that is, find
$$
\Omega_t^* \in  \argmin_{\Omega_t \in \rG_\templates} \frac{1}{\beta(\widehat\tau_{\Omega_t \to \Omega} (V_n(\Omega_t)), \Wm(\Omega))} \delta^{(\star\star)}_{\Omega_t \to \Omega},
$$
where $(\star\star)$ means $(\wc)$ or $(\ms)$ depending on the desired setting to study the errors. \om{Note that this criterion depends on the observation space $\Wm(\Omega)$ that we use for the reconstruction in $\Omega$, and there are two scenarios:
\begin{itemize}
\item We can use directly this criterion if we consider that $\Wm(\Omega)$ is known with enough advance, and that we have enough time to compute $\frac{1}{\beta(\widehat\tau_{\Omega_t \to \Omega} (V_n(\Omega_t)), \Wm(\Omega))} \delta^{(\star\star)}_{\Omega_t \to \Omega}$ for all $\Omega_t \in \rG_\templates$. Note however that this is a very costly operation in general.
\item There are settings in which a suboptimal criterion that does not involve $\Wm(\Omega)$ but that is computationally faster might be required. One scenario in which this is the case is when one whishes to study several different observation spaces $\Wm(\Omega)$. Another scenario concerns  applications in which one cannot assume that $\Wm(\Omega)$ is known with enough advance. In such cases, the selection of the template geometry has to be performed in a reduced computational time in the online phase. This is the case of numerous biomedical problems which we are particularly targeting in our numerical experiments.
\end{itemize}
In the following, we present a strategy for the second, more challenging scenario. Our approach is based on Theorem \ref{thm:bound2}. From bound \eqref{eq:thm-2-main-result} of that theorem,} it follows that a strategy to find the best template is to minimize over the Hausdorff distance 
\begin{equation}
d_H(\bS(\widehat \tau_{\Omega_t \to \Omega} (V_n(\Omega_t))), \bS(V_n(\Omega))),
\label{eq:choice-rho-v1}
\end{equation}
between a good linear subspace $V_n(\Omega)$ (coming, for example, from forward reduced modeling) and the transported subspace $\widehat \tau_{\Omega_t \to \Omega} \(V_n(\Omega_t)\)$. With this strategy, the output to select the best-template routine is thus
\begin{equation}
\label{eq:BT-running} 
\BT(\Omega) \in \argmin_{\Omega_t \in \rG_\templates}  d_H(\bS(\widehat \tau_{\Omega_t \to \Omega} \(V_n(\Omega_t) \)), \bS(V_n(\Omega))).
\end{equation}
In order to perform this selection in real time, we need to estimate quickly the map
$$
\Omega \in \rG \to \left\lbrace d_H(\bS(\widehat \tau_{\Omega_t \to \Omega}\( V_n(\Omega_t)\)), \bS(V_n(\Omega))) \cond \Omega_t\in \rG_\templates\right\rbrace.
$$
In our work, this is performed with a Multidimensional Scaling approach (MDS, see e.g.~\cite{MN1995, TDL2000, DG2005, GGKC2020}). We next describe the main steps.

\begin{remark}
%  \textcolor{gray}{Note that another criterion that does not involve $\Wm(\Omega)$ is to work with $\delta^{(\star\star)}_{\Omega_t \to \Omega}$. This strategy was studied in our numerical tests but it was outperformed by the criterion \eqref{eq:choice-rho-v1} discussed in the main text. We conjecture that the reason for this is  related to the fact that $\delta^{(\star\star)}_{\Omega_t \to \Omega}$ is connected to the approximation quality of the forward linear subspaceing problem instead of our current inverse reconstruction problem.}
  \om{Working with the quantity \eqref{eq:choice-rho-v1} is theoretically justified by Theorem \ref{thm:bound2}. Note that the theorem requires enough stability in the sense that \eqref{eq:hyp-beta} needs to be satisfied. This is taken as an assumption in the following development. In our numerical tests, this conditions is satisfied thanks to the quality of the data for the applications we focus on.}
\end{remark}

%\newpage
%A first option is to minimize only over the component $d_{\Omega_t \to \Omega}$. Another option that
% Another option, which has produced better numerical results, is to minimize over the Hausdorf distance 
%$$
%d^2_H(\bS(\widehat V_n(\Omega_1)), \bS(V_n(\Omega_1))
%$$
%
%
%
%This quantity is related to the approximation quality in the forward linear subspaceing problem instead of the inverse state estimation problem.  Our desired output for the best-template routine is thus
%\begin{equation}
%\label{eq:BT-running}
%\BT(\Omega) \in \argmin_{\Omega_t \in \rG_\templates} \delta^{(\star\star)}_{\Omega_t \to \Omega},
%\end{equation}
%In fact, we will see in the discussion that follows that this quantity will have to be symmetrized for technical reasons but let us keep \eqref{eq:BT-running} as the running definition for $\BT(\Omega)$.
%
%In order to perform this selection in real time, we need to estimate quickly the map
%$$
%\Omega \in \rG \to \{ \delta^{(\star\star)}_{\Omega_t \to \Omega} \cond \Omega_t\in \rG_\templates\}.
%$$
%In our work, this is performed with a Multidimensional Scaling approach (MDS, see e.g.~\cite{MN1995, TDL2000, DG2005, GGKC2020}). We next describe the main steps.

\paragraph{Step 1: Voxelize geometries:} To ease the manipulation and comparison between different domains, we work with voxelized descriptions of them involving a uniform grid mesh of $N_\vox$  cells. Therefore, instead of working with a given domain $\Omega\subset \bR^d$, we will actually manipulate vectors $v_{\Omega}\in \bR^{N_\vox}$ such that for all $i=1,\dots, N_\vox$, the voxel entry $v^{(i)}_{\Omega}$ is equal to the volume portion of the associated cell $i$ of the mesh. Ideally, the size of the grid mesh $N_\vox$ should be large enough in order to guarantee an isomorphism between the domains $\Omega \in \rG$ and their corresponding voxelizations $v_\Omega$.

The family of geometries $\rG$ is therefore replaced in practice by the voxelized representation,
$$
\rG \sim \rV \coloneqq \{ v_\Omega \in \bR^{N_\vox} \cond \Omega \in \rG \}.
$$ 
Similarly,
$$
\rG_\templates \sim \rV_\templates \coloneqq \{ v_\Omega \in \bR^{N_\vox} \cond \Omega \in \rG_\templates \}.
$$
As a result of the voxelization, we will alternatively write the manifold set of solutions $\cM(\Omega)$ as $\cM(v_\Omega)$ for all $\Omega\in \rG$. Also, in practice we will construct a best template mapping of the form
$$
\BT: \bR^{N_v} \in \rV \to \rV_\templates .
$$
\paragraph{Learning Phase -- Step 1: MDS:} 
We consider the manifold set
$$
\cS \coloneqq \{ \cM(v_\Omega) \cond v_\Omega \in \rV \}.
$$
Our goal is to find a low dimensional representation of $\cS$ using our database of $K$ templates,
$$
\cS_\templates \coloneqq \{ \cM(v_\Omega) \cond v_\Omega \in \rV_\templates \}.
$$
For this, suppose that $\cS$ is equipped with a metric $\rho$. The exact choice for $\rho$ will be specified later on. We then assemble the matrix of pairwise square distances between elements of $\cS_\templates$,
\begin{equation}
\pbD = (d_{i,j})_{1\leq i, j \leq K}, \qquad d_{i,j} =  \rho^2\left(\cM(v_{\Omega_i}), \cM(v_{\Omega_j}) \right).
\label{eq:dij}
\end{equation}
The vanilla version of MDS seeks to find vectors $x_1,\dots, x_K$ from an Euclidean space $\bR^p$ of small dimension $p$ such that
$$
\Vert x_i - x_j \Vert_{\ell^2(\bR^p)}^2 = d_{i,j}, \quad 1\leq i, j \leq K.
$$
The solution to this problem, if it exists, is not unique because if \new{ $\pbX^*=(x^*_1 \vert \dots\vert x^*_K) \in \bR^{p\times K}$} is a solution, then \new{$\pbX^*_c = (x^*_1+c\,\vert \dots\vert\, x^*_K+c)$} is also a solution for any vector $c\in \bR^p$. We therefore add a constraint in which we search for the unique centered solution such that $\sum_{i=1}^p x^*_{i,j} =0$ for all $j=1,\dots, K$. One can easily prove that, if such a centered solution $\pbX^*$ exists, then it satisfies the equation
\begin{equation}
(\pbX^*)^T \pbX^* = \pbC,
\label{eq:ip-emb}
\end{equation}
with
$$
\pbC \coloneqq - \frac 1 2 \pbH \pbD \pbH, \qquad \pbH \coloneqq \pbI - \frac{1}{K} ee^T, 
\qquad 
e \coloneqq 
\underbrace{(1, \dots, 1)}_{K}\, ^T.
$$
The matrix $\pbC$ resembles a covariance matrix in that if the original pairwise distances represent Euclidean distances in a $p$-dimensional space, $\pbC$ will be symmetric and positive semidefinite of rank $p$. Since $\pbC$ is symmetric, its eigenvalue decomposition is of the form
$$
\pbC = \pbV \mathbf{\Lambda} \pbV^T,
$$
where $\pbV=(v_1\vert \dots\vert v_K) \in \bR^{K\times K}$ is a unitary matrix and $\mathbf{\Lambda} = \diag(\lambda_1, \dots, \lambda_K) $ is a diagonal matrix containing the eigenvalues in the diagonal. We sort them in decreasing order $\lambda_1\geq \dots \geq \lambda_K$.

If $\pbC$ is positive definite of rank $p$, we have $\lambda_1\geq \dots\geq \lambda_p>0$ and $\lambda_i=0$ for $p<i\leq K$. In this case, we can exactly represent the objects as points in a $p$-dimensional space, in such a way that the square of the Euclidean distance $\Vert x_i - x_j \Vert^2_{\ell^2(\bR^p)}$ between each pair of points is exactly equal to $d_{i,j}= \rho^2\left(\cM(v_{\Omega_i}), \cM(v_{\Omega_j}) \right)$. To find the points, we consider the eigenvectors $v_1,\dots, v_p$ associated to the nonnegative eigenvalues and assemble the matrices
$$
\pbV_p = (v_1\vert \dots\vert v_p) \in \bR^{K\times p}, \quad \mathbf{\Lambda}_p \in \bR^{K\times p}, 
$$
\new{where $\(\Lambda_p\)_{ii} = \lambda_i$ and $\( \Lambda_p \)_{ij} = 0$, for $i\neq j$. We then set, for $r \leq p$ ($r \in \bN^*$):}
$$
\pbX = \mathbf{\Lambda}_r^{1/2} \pbV^T_r.
$$
Of course, in general $\pbC$ need not be positive semi-definite, which will not be true if there is no $p$-dimensional embedding representing the $K$ objects with specified pair-wise distances $d_{i,j}$. In such cases, the standard MDS procedure is to embed the data using only the positive eigenvalues. This yields an approximate embedding, whose quality depends on the importance of the eigenmodes that are discarded.

The selection of the metric for the manifold $\cS$ plays a critical role in the ability of MDS to find a low dimensional representation of $\cS$.  
%The celebrated ISOMAP procedure \cite{TDL2000, DG2005} works with geodesic distances $\rho$ between objects from $\cS$. In our case, we follow a different avenue \blue{[It is ISOMAP so it is not a different avenue.]} since we want to incorporate the quality of approximation with transported linear subspaces in the notion of distance. Therefore,
Ideally we would like to use $d^2_H(\bS(\widehat \tau_{\Omega' \to \Omega}\( V_n(\Omega')\)), \bS(V_n(\Omega)))$ as defined in \eqref{eq:choice-rho-v1} but the main obstacle is that this quantity is not symmetric. This is the reason why we use the symetrized version
\begin{align}
\rho^2( \cM(v_\Omega), \cM(v_{\Omega'}) )
&\coloneqq \frac 1 2 d^2_H(\bS(\widehat \tau_{\Omega' \to \Omega} \(V_n(\Omega')\)), \bS(V_n(\Omega))) + \frac 1 2 d^2_H(\bS(V_n(\Omega')), \bS(\widehat \tau_{\Omega \to \Omega'} \(V_n(\Omega)\))).
\label{eq:symmetrized}
\end{align}
Note that the above mapping $\rho:\cS\times\cS\mapsto \bR_+$ does not define a distance in the classical sense because it does not satisfy the triangle inequality. Despite this, the fact that it is symmetric is sufficient to perform the MDS procedure. We will see that this choice yields good results despite the fact that we do not work with a metric. The success of our choice may be connected to the fact that our function $\rho$ involves a notion of ordering since we have that $0=\rho( \cM(v_\Omega), \cM(v_\Omega) ) < \rho( \cM(v_\Omega), \cM(v_{\Omega'}) )$ if $\Omega'\neq \Omega$.

\paragraph{Learning Phase -- Step 2: Voxelization-to-embedding-space Mapping:} The final element in our procedure is to build a mapping between the voxelization $v_\Omega$ of a geometry $\Omega\in \rG$ and the corresponding point $x_\Omega\in \bR^p$ in the low dimensional parametrization of $\cS$. In our case, this step is done by a simple linear least-squares procedure but of course other options could be considered. We search for a minimizer of
$$
\min_{\pbW\in \bR^{N_\vox\times p}} \frac{1}{2K} \sum_{\Omega \in \rG_\templates} \Vert \pbW^T v_\Omega -x_\Omega \Vert^2_{\ell^2(\bR^p)}.
$$
Denoting $\pbV = (v_{\Omega_1}\vert \dots\vert v_{\Omega_K})\in \bR^{N_\vox \times K}$ and $\pbX = (x_{\Omega_1}\vert \dots\vert x_{\Omega_K}) \in \bR^{p\times K}$, the solution $\pbW$ with minimal norm satisfies the least-squares equation
$$
\pbV\pbV^T \pbW = \pbV \pbX,
$$
which can be solved by classical least-squares inversion techniques.

\paragraph{Practical Application of the routine \BT:} Once the above learning steps have been performed, given a domain $\Omega \in \rG$ we can quickly find the best template from $\rG_\templates$ by performing the following steps:
\begin{itemize}
\item Compute the corresponding voxelization $v_\Omega$ of the target geometry $\Omega$.
\item Find the representation of $\cM(v_\Omega)$ in the low-dimensional space by computing $x_\Omega = \pbW^T v_\Omega$.
\item Find the template geometry  which is the closest in the embedding
\begin{equation}
\Omega^*_t \in \argmin_{\Omega_t \in \rG_\templates} \Vert x_\Omega - x_{\Omega_t} \Vert^2_{\ell_2(\bR^p)},
\label{eq:practical-choice-omega-t}
\end{equation}
and set $\BT(\Omega) = \Omega^*_t$. This choice is justified from the following fact: our original minimization problem is \eqref{eq:BT-running}, that is, to find
\begin{equation}
\min_{\Omega_t \in \rG_\templates}  d_H(\bS(\widehat \tau_{\Omega_t \to \Omega} \(V_n(\Omega_t)\)), \bS(V_n(\Omega))).
\end{equation}
By definition \eqref{eq:symmetrized} of the metric $\rho$,
$$
d_H(\bS(\widehat \tau_{\Omega_t \to \Omega} (V_n(\Omega_t))), \bS(V_n(\Omega))) \leq \sqrt{2} \rho( \cM(v_\Omega), \cM(v_{\Omega'}) ) \approx \sqrt{2} \Vert x_\Omega - x_{\Omega'} \Vert^2_{\ell_2(\bR^p)}.
$$
Therefore, our choice \eqref{eq:practical-choice-omega-t} for $\Omega^*_t$ guarantees that
$$
\min_{\Omega_t \in \rG_\templates}  d_H(\bS(\widehat \tau_{\Omega_t \to \Omega} \( V_n(\Omega_t) \) ), \bS(V_n(\Omega))) \lesssim \sqrt{2} \Vert x_\Omega - x_{\Omega^*_t} \Vert^2_{\ell_2(\bR^p)}.
$$
\end{itemize}

\section{Numerical example}
\label{sec:numerical_example}

The proposed methodology is general and, among the many different applications that could be envisaged, problems from the field of biomedicine emerge as particularly relevant. As such, we next present a numerical example on this topic related to the task of reconstructing 3D blood velocity flows from Doppler ultrasound velocity images (see \cite{GGLM2021, GLM2021}). The tests are performed on synthetically generated observations due to our lack of real data. The linear observation functions $\{\ell_i\}_{i=1}^m$ will thus be defined in order to mimic the output of real ultrasound images.

Sections \ref{sec:tests-geom} to \ref{sec:tests-mds} give details on the test case, and outline the steps performed for the training phase. The training follows exactly the guidelines given in section \ref{sec:strategy}. Section \ref{sec:tests-perf} quantifies and illustrates the good performance of the reconstruction strategy.

\subsection{Geometry}
\label{sec:tests-geom}
In our example, the family $\rG$ of geometries is a set of 3D Venturi tubes with variations on three geometrical parameters concerning the tube coarctation (see Figure \ref{fig:venturi_sampling}). The parameters are the coarctation length $S_l$, its radius $S_r$, and its position along the $y-$axis $S_x$. The ranges of the geometrical parameters are $S_r \in [1.4,2.6]$ mm, $S_l \in [0.8L,1.2L]$ and $S_x \in [5,11]$ mm. The length of the tube is fixed to $L = 5 \text{ cm}$, and its diameter to $D = 0.4 \text{ cm}$.

\paragraph{Training Phase:} We work with $K=64$ template geometries for the database $\rG_\templates$. They are computed using a uniform grid sample on the three geometrical parameters. 
\begin{figure}[!htbp]
\centering
\includegraphics[height=4cm]{./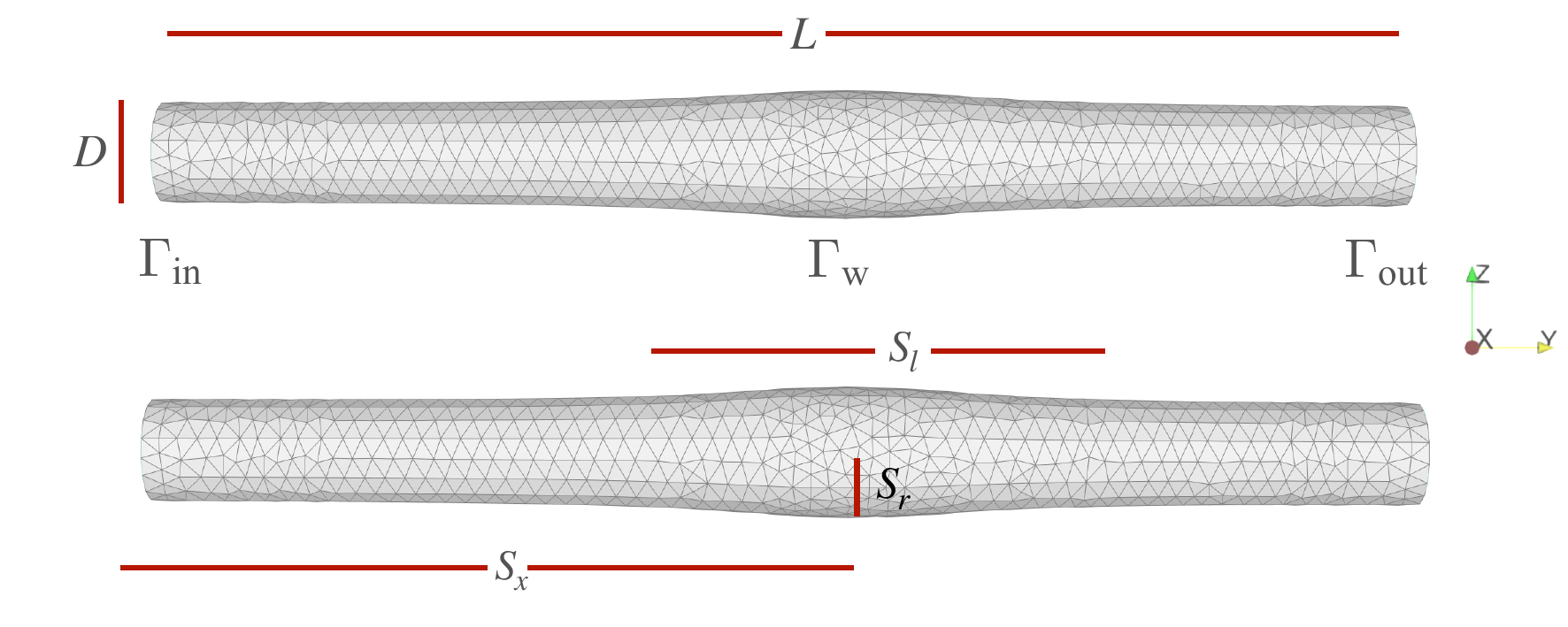}
\caption{Scheme for the generation of the set $G$.}
\label{fig:venturi_sampling}
\end{figure}

\subsection{Physics, solution manifold $\cM(\Omega)$, and linear subspace $V_n(\Omega)$}
\label{sec:tests-manifold}
We assume that the fluid is governed by the Stokes equations defined, for a given $ \Omega \in \rG$, as the problem of finding the velocity $u \in \[H^1\(\Omega;[0,T]\)\]^3$ and the pressure $p \in L^2(\Omega; [0,T])$ such that:
\begin{equation}
\left\lbrace
\begin{aligned}
\partial_t u - \mu \Delta u  + \nabla p = 0 & \text{ in } \Omega \\
\nabla \cdot u = 0 & \text{ in } \Omega \\ 
u = \(0,0,0\) & \text{ on } \Gamma_{\text{w}} \\
u = u_0 \( 0, 1 - \frac{x^2 + z^2}{(D/2)^2}, 0 \) \sin\(2 \pi t\) & \text{ on } \Gamma_{\text{in}} \\
\( \frac{\nabla^T u  + \nabla u}{2} - p\pbI \) \cdot n = (0,0,0) & \text{ on } \Gamma_{\text{out}}.
\end{aligned}
\right.
\label{eq:stokes}
\end{equation}
where $\pbI$ is an identity matrix of size three, $n$ is a unitary vector pointing outwards the working domain, and $u_0\in \bR_+$. The boundary $\partial \Omega$ is decomposed into 3 disjoint subdomains,
$$
\partial \Omega = \Gamma_{\text{in}} \cup \Gamma_{\text{out}} \cup \Gamma_{\text{w}},
$$
where $\Gamma_{\text{in}}$ is the inflow part, $\Gamma_{\text{out}}$ the outflow, and $\Gamma_{\text{w}}$ corresponds to the walls (see Figure \ref{fig:venturi_sampling}). 

In our example, we reconstruct velocities taking $V(\Omega)=[L^2(\Omega)]^3$ as the ambient reconstruction space. Note that this does not match with the space $[H^1(\Omega)]^3$ in which velocity is defined in the Stokes equation. This choice was made in order to target the reconstruction of the field and not its derivatives. 

For each $\Omega \in \rG$, we work with the manifold
$$
\cM(\Omega) \coloneqq \{ u(y) \cond y \in \rY\},
$$
with
$$
\rY \coloneqq \{ y = (t, u_{\text{0}}, \mu) \in [0,0.5 \text{ s.}] \times  [0.01, 1 \text{ cm/s}] \times [0.01, 0.1 \text{ P}] \}.
$$
\paragraph{Training Phase:} For each $\Omega \in G_\templates$, we compute a finite training subset of $\cM(\Omega)$ with $N_s=12~800$ snapshots, and we compute its Proper Orthogonal Decomposition (POD). The parameters to generate the snapshots are sampled from a uniform random distribution. Appendix \ref{app:stokes-solver} gives some details on the discretization and the solver used to generate them. The reduced order model $V_n(\Omega)$ is the subspace spanned by the POD eigenfunctions associated to the $n=20$ most energetic modes.

\subsubsection{Example of $\hat{\tau}_{0 \rightarrow 1}$ for mass conservative fields}

We have described in section \ref{sec:transport-routine} how fields are transported among domains. Let us illustrate the methodology with a numerical example between two geometries $\Omega_0$ and $\Omega_1$, as shown in figure \ref{fig:tau_example}.a and \ref{fig:tau_example}.b, respectively. Let $v_{st} \in \[H^1(\Omega)\]^3$ be a divergence free vector field, depicted on figure \ref{fig:tau_example}.a and solution to the Stokes problem \eqref{eq:stokes}, a snapshot in the training set of $\Omega_0$. In figure \ref{fig:tau_example}.b we observe the result of the shape registration via LDDMM (implemented using \cite{JMLR:v22:20-275}) computed from \eqref{eq:lddmm_extension}. Mass conservation is not preserved nonetheless. In order to convey a divergence free field in the arrival geometry we define the operator $p$ from equation \eqref{eq:post_process_map} as the Piola transform $p : [H^1(\Omega_0)]^3 \mapsto [H^1(\Omega_1)]^3$ (see \cite{ciarlet1988} or \cite{guibert2014}):
$$
p(v) = \frac{\( I_{3 \times 3} + \nabla \[ \cI_{1 \to 1'} \circ \phi_{0 \to 1'} \( d_0 \) \] \) }{ \text{det} \( I_{3 \times 3} + \nabla \[ \cI_{1' \to 1} \circ \phi_{0 \to 1'} (d_0) \] \) } \cI_{1' \to 1} \circ \phi_{0 \to 1'} \( v \).
$$

In figure \ref{fig:tau_example}.c we observe how this transformation recovers mass conservation in $\Omega_1$. The underlying mechanism of this operator is well illustrated with the scaling factor of figure \ref{fig:tau_example}.d.

\begin{figure}[!htbp]
\centering
\subfigure[Stokes snapshot $v_{st}$]{
\includegraphics[height=0.8cm]{./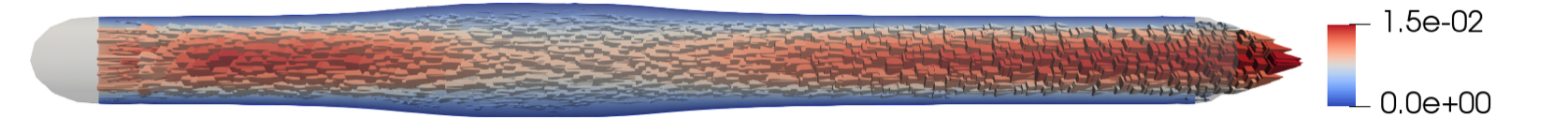}}
\subfigure[$\phi_{0\to 1'}(v_{st})$]{
\includegraphics[height=0.8cm]{./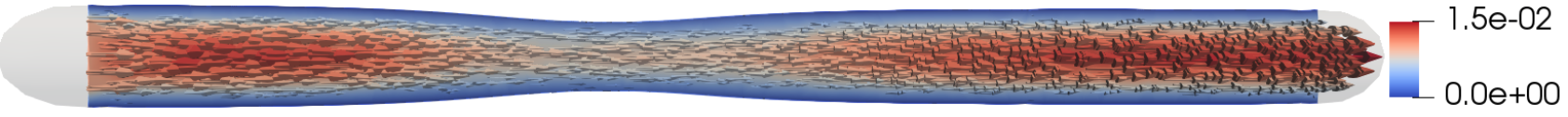}}
\subfigure[$p \circ \cI_{1' \to 1} \circ \phi_{0 \to 1'}(v_{st})$]{
\includegraphics[height=0.8cm]{./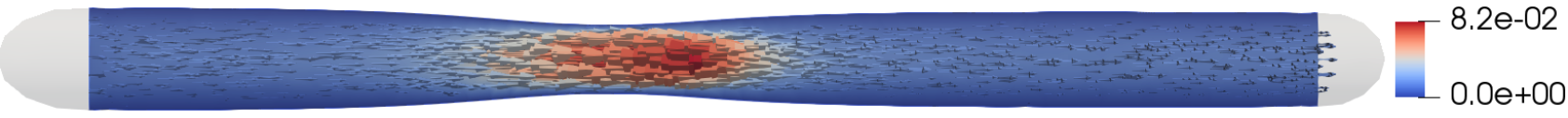}}
\subfigure[$ \text{det} \( I_{3 \times 3} + \nabla \[ \cI_{1' \to 1} \circ \phi_{0 \to 1'} (d_0) \] \) $]{
\includegraphics[height=0.8cm]{./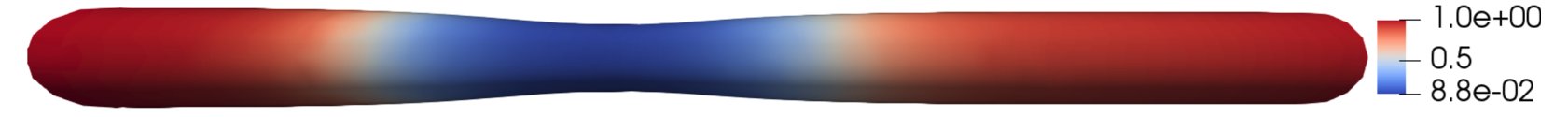}}
\caption{A divergence free field transported among geometries with a different value for $S_r$.}
\label{fig:tau_example}
\end{figure}

\subsection{MDS}
\label{sec:tests-mds}
We compute the MDS from the spectrum of the inner product matrix \eqref{eq:ip-emb}. To do so, we first compute the matrix $\pbD=(d_{i,j})_{1\leq i, j \leq K}$ of pairwise distances between the $K=64$ templates (see \eqref{eq:dij}). Each entry $d_{i,j}$ is computed using formula \eqref{eq:symmetrized} to quantify distances between two manifolds on different geometries.

To visually illustrate the methodology, we select a subset of $\widetilde K =16$ and show in Figure \ref{fig:hausdorff_example} the values $d_{i,j}$ of the matrix $\pbD$. Figure \ref{fig:points-low-dim} shows the positions $x_{\Omega}$ in the reduced Euclidean space of dimension $p=2$ for the $\widetilde K$  geometries. It is interesting to remark that the low dimensional representation of the geometries reflects the main differences in the geometrical parameters despite that the MDS methodology is fully non-parametric. The figure shows that the ``dominant'' parameter that drives metric changes is the radius $S_r$ since the points $x_\Omega$ tend to cluster following its values. For $\widetilde K =16$ geometries a bi-dimensional representation is enough to get a good embedding. For $K=64$ geometries, we work in $\bR^3$.
\begin{figure}[!htbp]
\centering
\subfigure[Pair-wise distances]{
\includegraphics[height=5cm]{./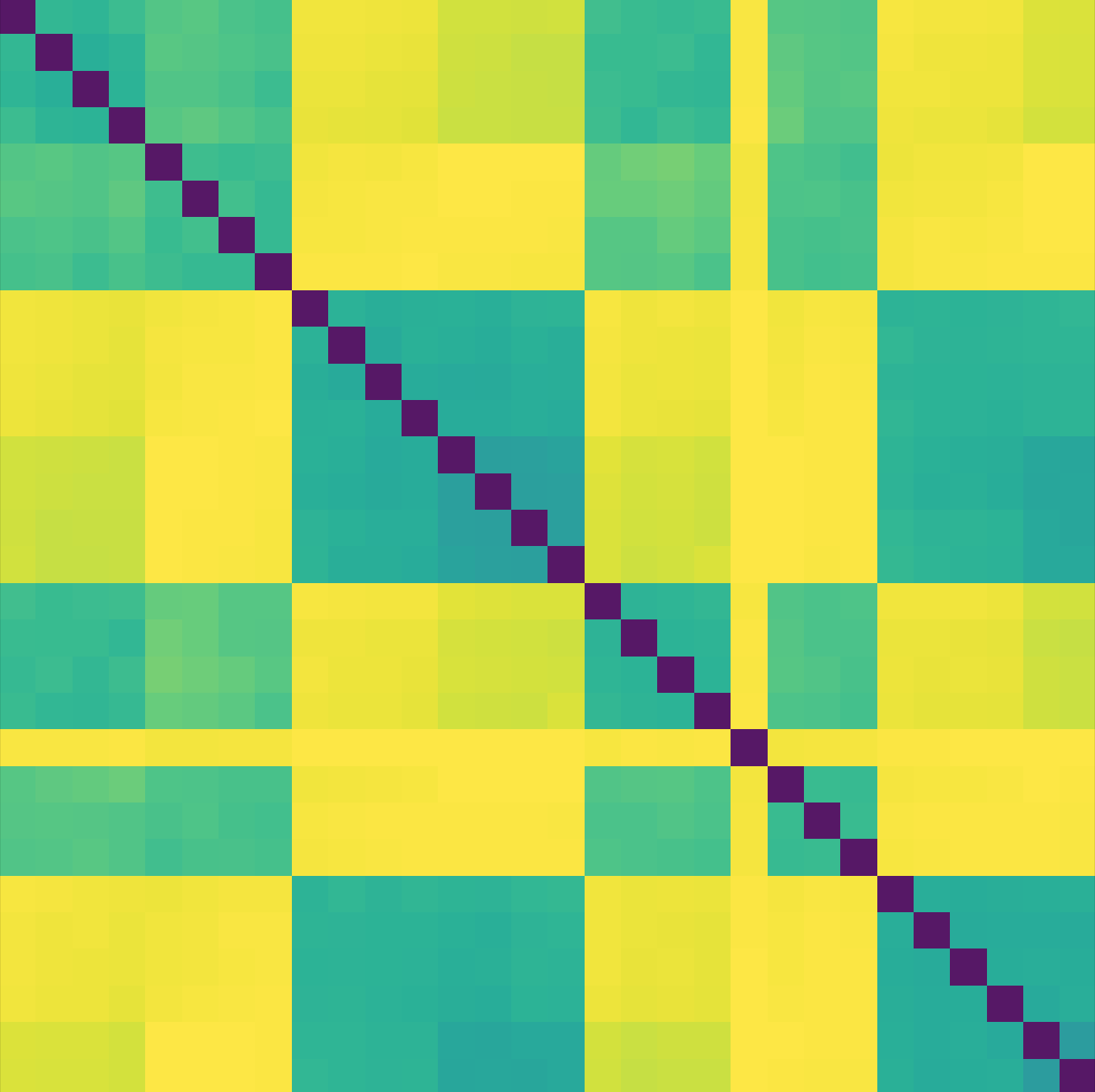}
\label{fig:hausdorff_example}
}
\subfigure[MDS coordinates of $X_\Omega$]{
\includegraphics[height=5cm]{./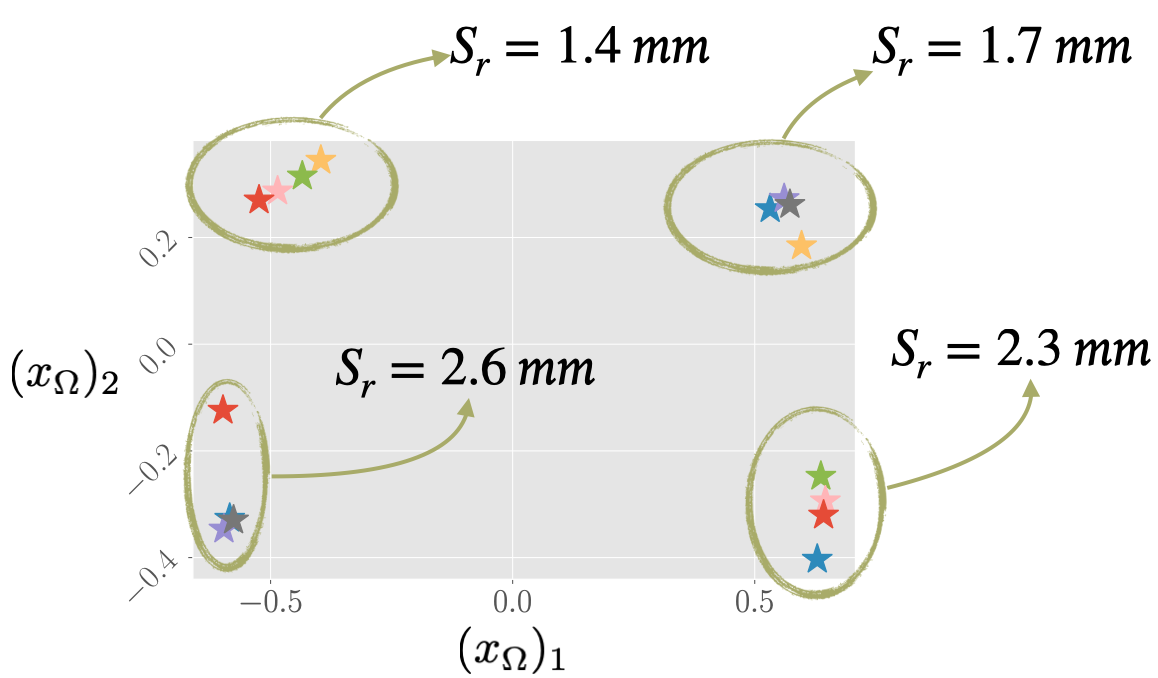}
\label{fig:points-low-dim}
}
\caption{Pair-wise distances plot (normalized scale in [0,1], where blue is 0 and yellow is 1) between 16 geometries and MDS representation in $\bR^2$.}
\end{figure}

\subsection{Reconstruction of synthetic data}
\label{sec:tests-perf}
\paragraph{Definition of the observation space $\Wm(\Omega)$:} For a given $\Omega\in \rG$, we consider a partition of $\Omega = \cup_{i=1}^m \Omega^{\text{voxel}}_i $ into $m$ disjoint subdomains (voxels) $\Omega^{\text{voxel}}_i$. We mimic getting ultrasound images by defining the linear functionals $\ell_i\in L^2(\Omega)$ as
\begin{equation}
\ell_i(u) = \int_{\Omega^{\text{voxel}}_i} u \cdot b ~ \dx,\quad 1\leq i\leq m,
\label{eq:the_measures1D}
\end{equation}
where $b$ is a unitary vector giving the direction of the ultrasound beam. In our case, the plane is chosen to be $z=0$, the ultrasound direction is $b = [0,\sqrt{2}/2,\sqrt{2}/2]$ and the size of voxels is 2.5 mm$^3$. The dimension $m$ of the total number of observations changes slightly between geometries. The geometry with the smallest amount of voxels, i.e., the geometry corresponding to the smaller parameter $S_r$ and maximal $S_l$, is $m=59$. Given that the domain is unknown a-priori, we need to address the construction of the space $W_m(\Omega)=\vspan\{\omega_i\}_{i=1}^m$ during the online phase . The problem of computing the Riesz representers of the measures reads: Find $\{ \omega_i \}_{i=1}^m \in V$ such that
$$
\langle \omega_i, v \rangle_{V(\Omega)} = \int_{\Omega^{\text{voxel}}_i} v \cdot b~\dx \quad \forall v \in V(\Omega),
$$
Since our reconstruction space $V(\Omega)$ is $L^2(\Omega)$, we have that $\omega_i =  \charFun_{\Omega^{\text{voxel}}_i} b$, and the numerical cost of computing the family of representers $\{\omega_i \}_{i=1}^m$ is negligible in our case. In Figure \ref{fig:doppler-img} we give an example of a PDE solution $u$ and its associated synthetic Doppler ultra-sound data $P_\Wm u$.

\begin{figure}[!htbp]
\centering
\subfigure[Example of forward simulation $u \in \cM(\Omega)$]{
\includegraphics[height=1.8cm]{./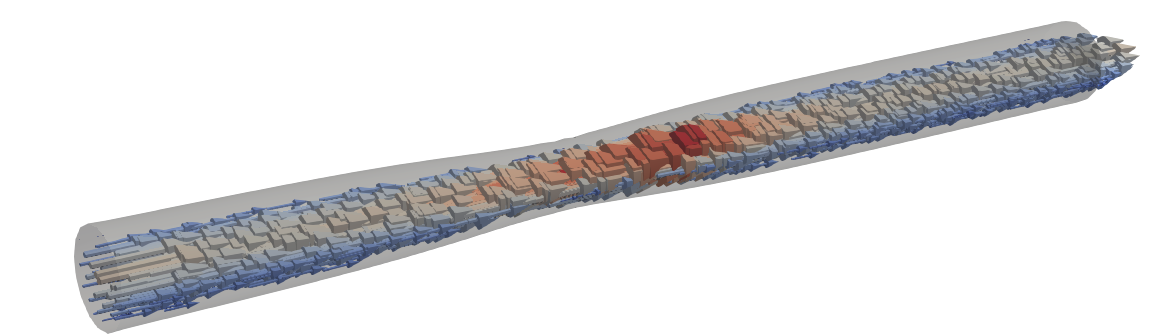}
\includegraphics[height=2cm]{./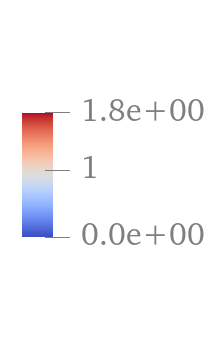}}
\subfigure[Synthetic data $P_{W_m} u$]{
\includegraphics[height=1.8cm]{./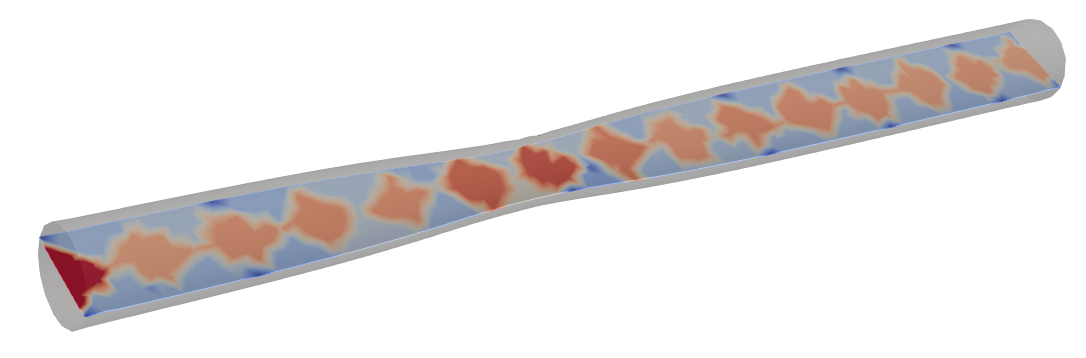}
\includegraphics[height=2cm]{./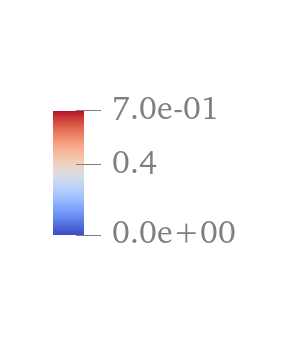}}
\caption{Snapshot in manifold of solutions and its projection in the space $W_m$. The measures emulates Doppler ultrasound data with a transducer steered with an angle of $\pi/4$ respect to the main fluid direction.}
\label{fig:doppler-img}
\end{figure}

\paragraph{Reconstruction:} We test the methodology with $K_{\text{test}} = 16$ test working domains $G_\test = \{\Omega^\test_i\}_{i=1}^{K_{\text{test}}}$ which are taken different from the geometries in $G_\templates$. For each test working domain, we sample $N_{\text{target}} = 16$  target simulations of the governing dynamics in $\cM(\Omega_t^i)$. This yields a total of $50$ snapshots per target due to time marching.

%We verify that the method delivers good reconstructions by choosing either a good or the best available template among the 64 working domains sampled. 

\begin{figure}[!htbp]
\centering
\includegraphics[height=6.5cm,angle=90]{./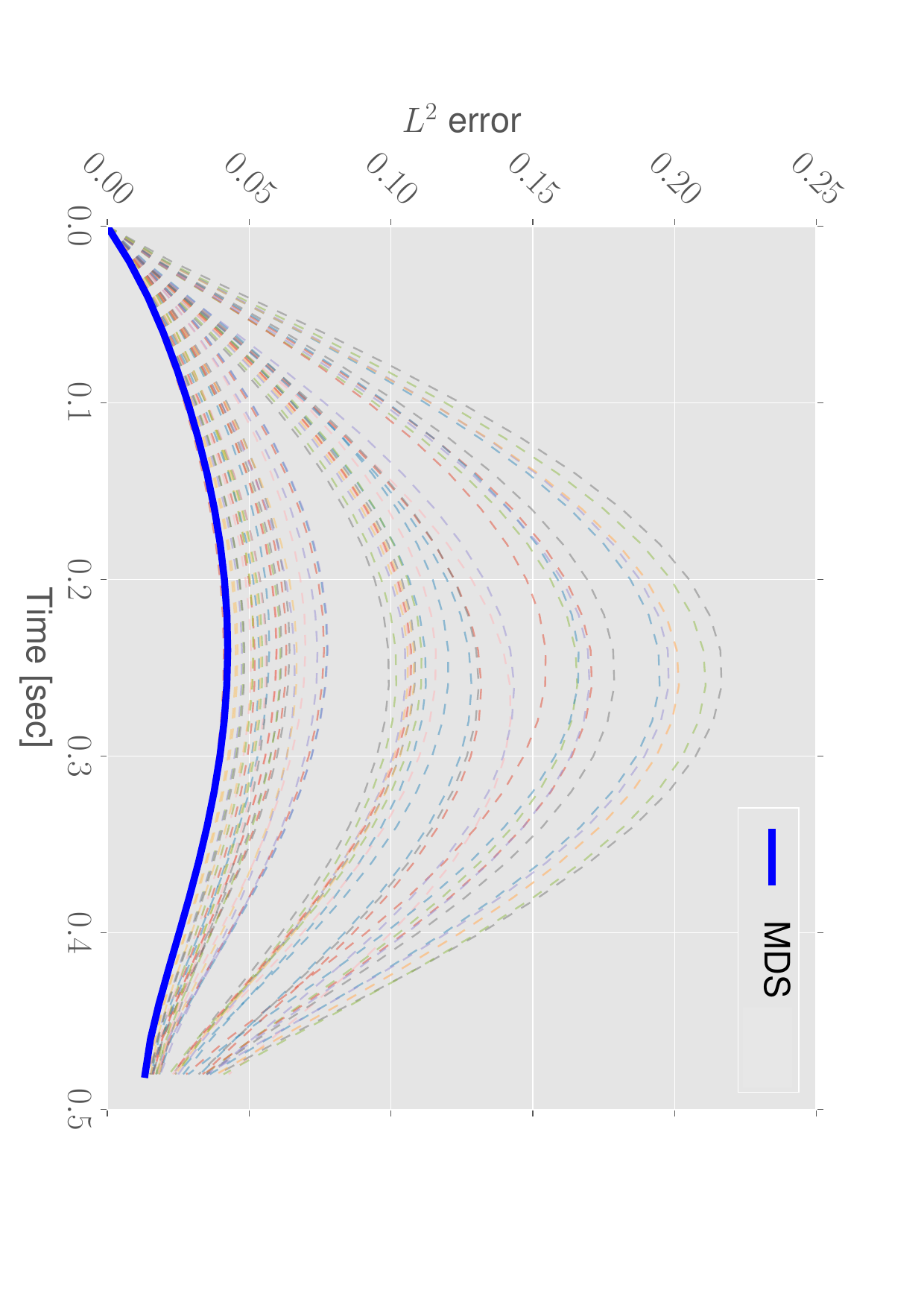}
\includegraphics[height=6.5cm,angle=90]{./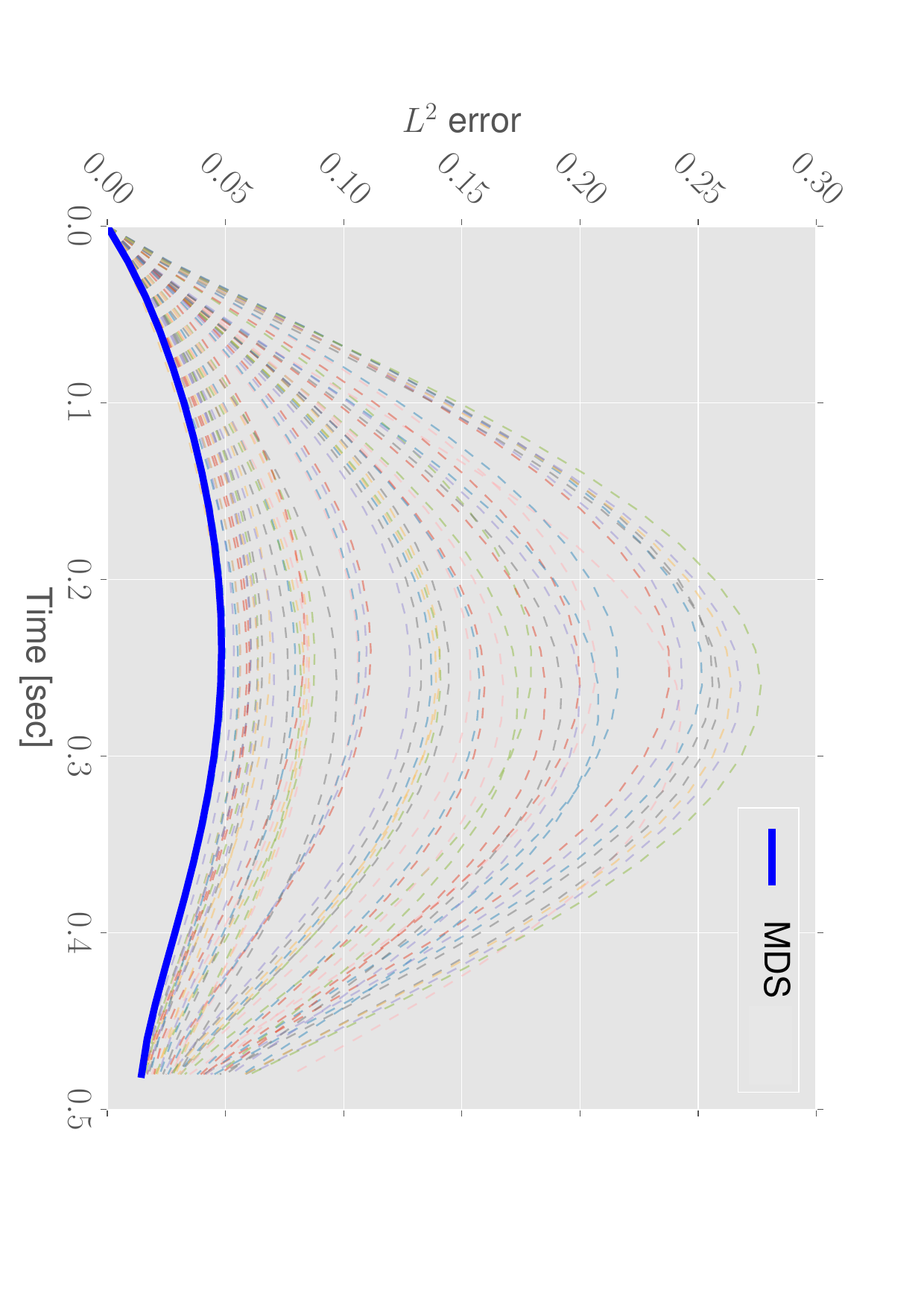}
\includegraphics[height=6.5cm,angle=90]{./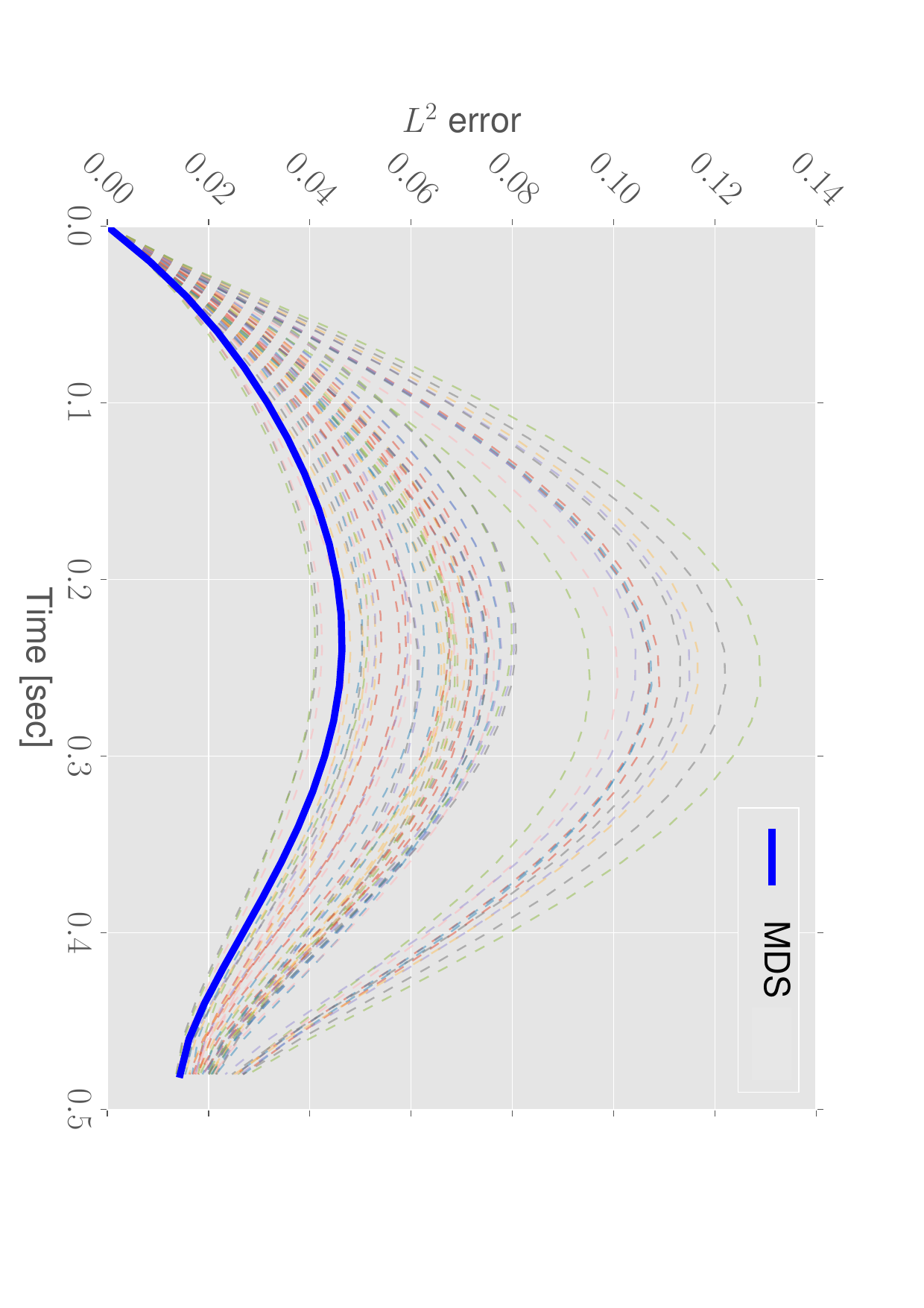}
\includegraphics[height=6.5cm,angle=90]{./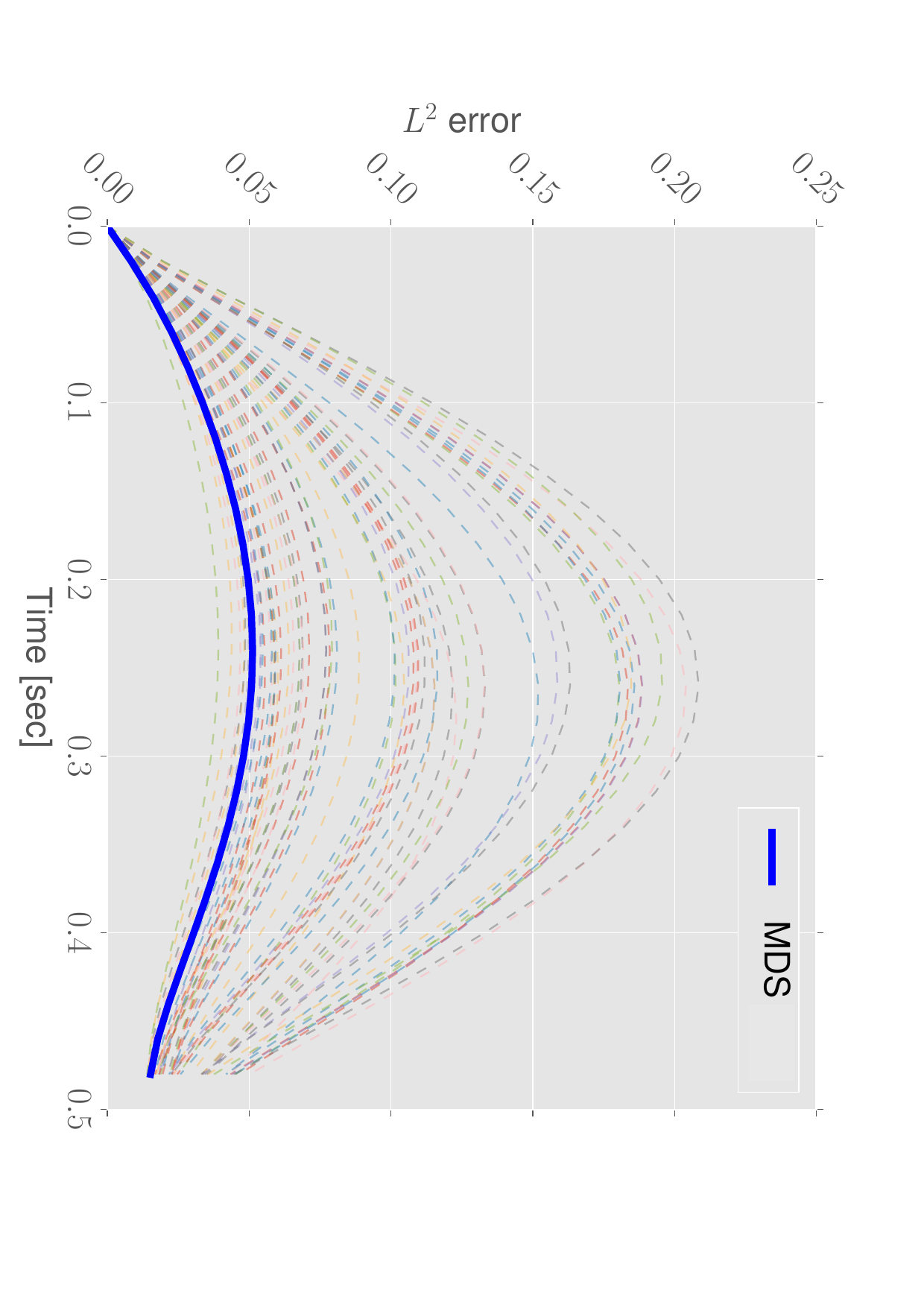}
\caption{The reconstruction error \eqref{eq:error_time} with the best template for 4 different target geometries $\Omega\in G_\test$. The \texttt{Best-Template} methods is able to identify a good or the best template.}
\label{fig:reconstuction_examples}
\end{figure}

We study the performance of our method in terms of relative average reconstruction errors in $L^2$. For a given target geometry $\Omega \in G_\test$, if we reconstruct by transporting linear subspace $V_n(\Omega_t)$ from a given template geometry $\Omega_t\in G_\templates$, the relative error for the $i$-th simulation at time $t$ is defined as
\begin{equation}
e^i_{\Omega_t\to \Omega}(t) = \frac{\norm{u_i(t) - A\(P_{W_m} u(t)\)}}{\(\int_0^T \norm{u_i(t)}^2 ~\dt \)^{1/2} }.
\label{eq:error_time}
\end{equation}
In Figure \ref{fig:reconstuction_examples}, we fix one target geometry $\Omega\in G_\test$ and we show the average error over all simulations $i$, namely
$$
e_{\Omega_t\to \Omega}(t) = \frac{1}{N_\text{target}} \sum_{i=1}^{N_\text{target}}e^i_{\Omega_t\to \Omega}(t).
$$
Each curve depicts the error for each template geometry $\Omega_t \in G_\templates$. The role of the routine \texttt{Best-Template} which we have built in the learning stage is to quickly select the template which will be the most appropriate so that we obtain the most accurate reconstruction results. The selection with our proposed construction yields the error curve which is labeled MDS. We tested several possibilities for the definition of the metric $\rho$ but the one based \eqref{eq:symmetrized} produced systematically the best results, so, for the sake of clarity, we only present the results for this choice. We observe in Figure \ref{fig:reconstuction_examples} that the selection method is near-optimal in the sense that it chooses either a good or the best available template among the 64  template domains. Figure \ref{fig:rec_snap} gives an illustration of the reconstruction of one snapshot with our pipeline.

\begin{figure}[!htbp]
\centering
\subfigure[Target field $u$]{
\includegraphics[height=0.9cm]{./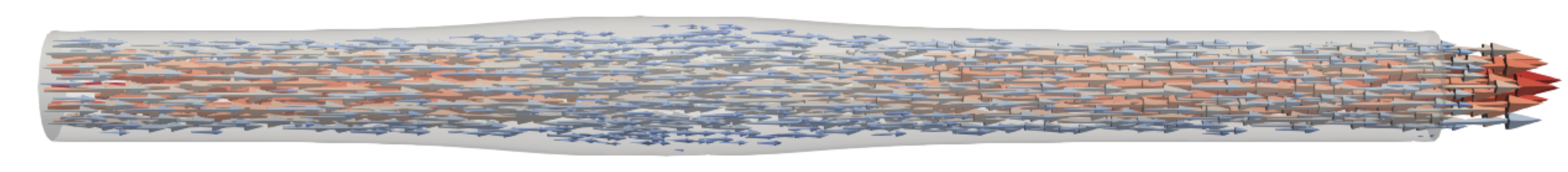}
\includegraphics[height=1.cm]{./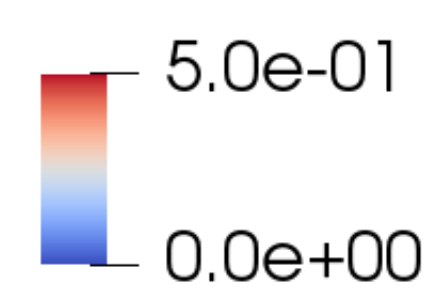}} \\
\subfigure[Reconstructed field $A\(P_{W_m}u\)$]{
\includegraphics[height=1.1cm]{./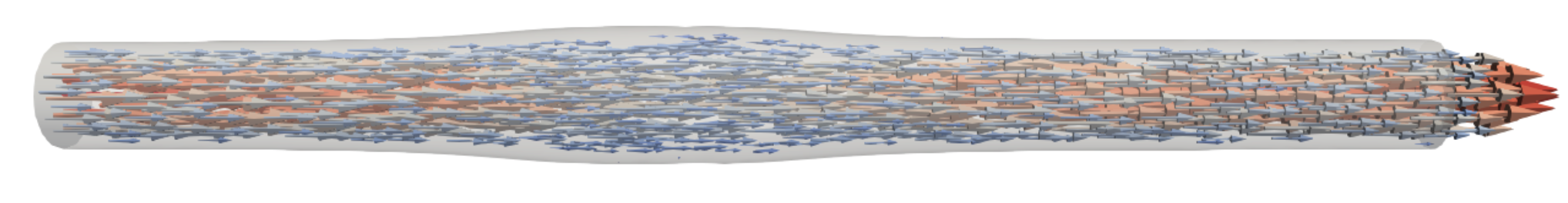}
\includegraphics[height=0.9cm]{./rec12_colorbar.png}} \\
\subfigure[Error field $u - A\(P_{W_m}u\)$]{
\includegraphics[height=0.75cm]{./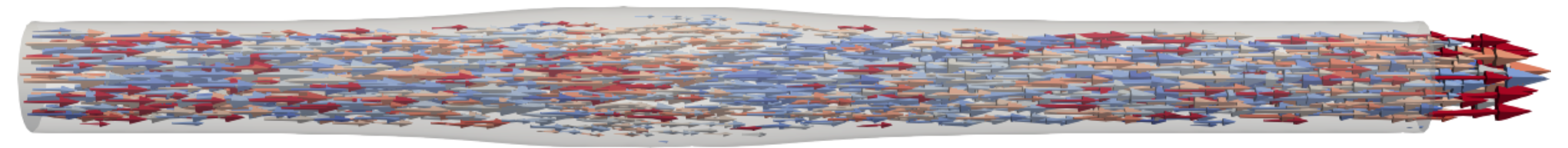}
\includegraphics[height=0.9cm]{./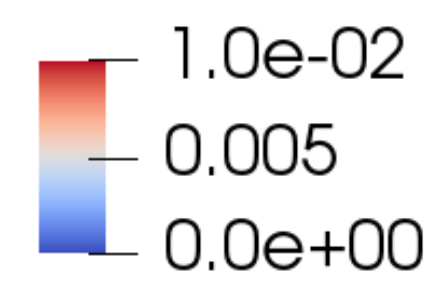}} \\
\caption{Example of field reconstruction for one target snapshot}
\label{fig:rec_snap}
\end{figure}

In addition, it is important to compare with a set of reconstructions on all the $K_{\text{test}}$ test geometries $\Omega_{\text{test}} \in G_{\text{test}}$ with the pre-computed ROMs $V_n(\Omega_{\text{test}})$. We want to quantify the difference between the PBDW algorithm output of $A_{V_n(\Omega_{\text{test}})} ( \omega ) $ and that of $A_{\widehat{V}_n(\Omega)} (\omega)$, for $\omega \in W_m$. We recall that $\widehat{V}_n(\Omega) = \widehat\tau_{\Omega^*_t \to \Omega}( V_n (\Omega^*_t) )$, and $\Omega^*_t = \BT(\Omega_{\text{test}})$.

It is encouraging to observe that the relative error
$$
\max_{i} \frac{ \norm{ A_{V_n(\Omega_{\text{test}})} (\omega_i) - A_{\widehat{V}_n(\Omega)} (\omega_i) }_{L^2} } { \norm{u_{\text{GT}}^{i} }_{L^2} },
$$
is below one percent for a set of 16 ground truth solutions $\{u_{\text{GT}}^{i} \}_{i=1}^{16}$ in each test domain $\Omega_{\text{test}}$, with $\omega_i = P_{W_m} \( u_{\text{GT}}^{i} \)$. 

\section{Conclusion}
We have developed a framework to solve in near-real time state estimation problems for applications that present variations in the spatial domain. For a given target geometry, the reconstruction strategy is based on selecting a relevant linear subspace defined on a template geometry,  which is then transported to the target geometry. The reduced model is chosen among a pool of available reduced models, each one defined on a different template geometry. The model selection strategy is based on a dimensionality reduction technique based on MDS. The technique requires defining an appropriate notion of distance between manifold sets $\cM(\Omega)$ from different geometries $\Omega$. Among the different options for the metric which we have tested in our numerical experiments, the one based on formula \eqref{eq:symmetrized} has produced the best results, and is simple to implement in practice. This choice is backed up by our theoretical analysis from Theorem \ref{thm:bound2}.

The present contribution paves the way for further developments in the field of inverse problems presenting shape variability, especially in the field of biomedical engineering. Future research will be devoted to applying the present methodology to applications with real data, and \new{with more complex geometrical shapes, with possible time-dependency. In principle, increasing the complexity of the domain can be addressed with the same tools as the ones presented in this paper, which are essentially based on LDDMM (for mappings between surfaces) and MDS (for the search of a Euclidean embedding). However, these techniques will inevitably be increasingly challenged for increasingly complex geometries. For LDDMM, we may require working with more surface points to find a good map between surfaces. In turn, this will challenge the underlying optimization task of LDDMM. As for MDS, it may be necessary to increase the number $K$ of available templates geometries to find a good quality embedding, which will increase the computational burden.}

\appendix

\section{Details on the numerical solution of the Stokes equation}
\label{app:stokes-solver}

Using finite elements, we search for the projection coefficients of $u$ and $p$ in the space of piece-wise linear polynomials $[\bP_1(\Omega_h)]^3$ and $\bP_1(\Omega_h)$ respectively. The Lagrange polynomials are considered on $\Omega_h$, a tessellation of $\Omega$ with tetrahedrons of size $h = 0.08$ cms. We don't adopt a new notation for the projection of the states in the polynomial spaces when no confusion arises. Time discretization is done via implicit finite differences using a time step of $\Delta t = 0.02$ seconds. The semi-discrete weak problem to solve for each $u^{n+1}$ reads:
\begin{equation}
\frac{1}{\Delta t} \< u^{n+1}, v \> +  \mu \< \nabla u^{n+1}, \nabla v \> - \< p , \nabla \cdot v \> + \< \nabla \cdot u^{n+1}, q \> + \sum_{\bT} h_{\bT}^2 \< \nabla p, \nabla q \>_{\bT} = \frac{1}{\Delta t} \< u^{n}, v \>,
\label{eq:weak_stokes_discrete}
\end{equation}
$\forall (v,q) \in [H^1(\Omega)]^3 \times L^2(\Omega)$, where $v$ and $q$ are test functions and where $\<\cdot,\cdot\> $ denotes the inner product in $L^2(\Omega)$. In addition, $\<\cdot, \cdot\>_{\bT}$ denotes the $L^2(\Omega_h)$ inner product over a single tetrahedron $\bT$ in $\Omega_h$. The bilinear form concerning this term is a typical stabilization procedure to deal with the saddle point nature of the problem \cite{brezzi1984}. 

The matrix assembly and solution of the monolithic system of equations is done with CPU parallelization via MPI using the software MAD (\cite{galarceThesis}, chapter 5).

\section{\nuevo{Numerical implementation of PBDW}}
\label{app:numerics_pbdw}
This appendix concerns the solution of problem \eqref{eq:pbdw-algo}. We briefly paraphrase and complement the content from \cite[Appendix A]{GLM2021}, where this discussion has been carried out in the context of blood flows.

Let us formulate the problem \eqref{eq:pbdw-algo} in a discrete framework. We adopt the following definitions and considerations:
\begin{itemize}
\item Let $\Omega_h \subset \bR^d$ ($d=2,3$) be a tessellation of the working domain $\Omega$.
\item Let $V_h(\Omega_h)\subseteq V(\Omega)$ be a finite element space on $\Omega_h$. In the following, we work with a finite element basis $\{\cL_i\}_{i=1}^\cN$ for $V_h(\Omega_h)$, where $\cN$ is the number of degrees of freedom of any function in $V_h(\Omega_h)$.
%\item Let $u_h \in V_h(\Omega_h)$ be an approximation of the state $u\in V(\Omega)$ to estimate.
\item Let $M \in \bR^{\cN \times \cN}$ be the mass matrix representing the inner product $\<\cdot, \cdot\>_{V_h(\Omega_h)}$. For example, in the numerical example of section \ref{sec:numerical_example}, we have $V = L^2(\Omega)$, therefore $M_{ij} = \int_{\Omega_h} \cL_i \cL_j \dx$.
\item Let $l=\{l_i\}_{i=1}^m \in \bR^m$ be the vector of measurement observations. Each entry is given by $l_{i} = \< \omega_i, u \>$.
\item Let $\cW \in \bR^{\cN \times m}$ be a matrix where the entry of each column corresponds to the coefficients of the orthonormal Riesz representers $\{\omega_i\}_{i=1^\cN}$ on the basis for $V_h(\Omega_h)$.
\item Let $\{ \rho_i \}_i^n$ be an orthonormal basis for $V_n(\Omega)$, and a matrix $\Phi \in \bR^{\cN \times n}$ where the entry of each column corresponds to the coefficients of the basis for $V_n(\Omega)$ on the basis for $V_h(\Omega_h)$.
\item Let $\Pi \in \bR^{ \cN \times \cN }$ be the matrix representation of the orthogonal projector $P_{V_n^{\perp}}:V\to V_n^\perp$. The matrix is given by $\Pi = I - \Phi \Phi^T M$, where $I$ is an identity matrix of size $\cN$.
\end{itemize}

In this setting, problem \eqref{eq:pbdw-algo} can be written in discrete form as follows: We search for a vector $a^*\in \bR^\cN$, coefficients of $u_h^{*}$ on the basis for $V_h(\Omega_h)$ (i.e. $u_h^* = \sum_i a_i \cL_i$), reconstruction of $u$, such that:
%\om{[Clash of notation: $u^*_h$ was not really defined as the coefficients of the basis but the function itself.]} 
\begin{equation}
\begin{aligned}
a^{*} &= \argmin_{a \in \mathbb{R}^\cN}{ \norm{\Pi a }_{V_h(\Omega_h)}^2 } \\
\text{s.t } & \cW^T M a - l = 0
\end{aligned}
\end{equation}

Consider a Lagrange multiplier $\lambda \in \bR^m$. The problem can be written without restriction as follows:
\begin{equation}
\begin{aligned}
a^{*} &= \argmin_{a \in \mathbb{R}^\cN}{ \norm{\Pi a }_{V_h(\Omega_h)}^2  - \lambda^T \( \cW^T M a - l \) }\\
& = \argmin_{a \in \mathbb{R}^\cN}{ \(\Pi a\)^T M \Pi a - \lambda^T \( \cW^T M a - l \) } \\
& = \argmin_{a \in \mathbb{R}^\cN}{ a^T \Pi M \Pi a  - \lambda^T \( \cW^T M a - l \) } \\
& = \argmin_{a \in \mathbb{R}^\cN}{ a^T M \Pi a  - \lambda^T \( \cW^T M a - l \) },
\end{aligned}
\end{equation}
where the fourth line comes from exploiting the symmetry of $M$ and $\Pi$, and the projector property $\Pi^2 = \Pi$.

Optimality conditions lead to the saddle point problem:
\begin{align}
M \Pi a - M \cW \lambda = 0 \\
\cW^T M a - l = 0.
\label{eq:pbdw_euler_lagrange}
\end{align}
which is a $m+n$ system of equations. Nonetheless, we can reduce the size to $n$ with the orthogonal decomposition $a^{*} = v^*_h + \eta^*_h$ where
\begin{equation}
v^*_h = \( I - \Pi \) a^*, \quad \eta^*_h = \Pi  a^{*}.
\end{equation}
%\om{[With this definition it seems that $v^*_h\in V_n^T$, and this is the opposte of what we want.]}

In addition, consider the Gramian matrix defined as $\bG = \cW^T M \Phi$ (of dimensions $m \times n$) and the expansion of $v^*_h = \Phi c$, where $c \in \bR^n$.
%Thus, multiplying \eqref{eq:pbdw_euler_lagrange} by $\bG^T$ leads to the normal equations
Thus, \eqref{eq:pbdw_euler_lagrange} leads to the normal equations
\begin{equation}
\bG^T \bG c = \bG^T l.
\label{eq:pbdw_normal_equations}
\end{equation}
Assumption $\beta(V_n, W_m)>0$ guarantees that the matrix $\bG^T\bG$ is invertible. Therefore the core operation to solve the original optimization problem \eqref{eq:pbdw-algo} consists in solving an $n \times n$ system of normal equations at online phase to compute $v^*_h$, after which it follows that the state is fully computed doing $\eta^*_h = \cW \cW^T M v^*_h - \omega$.

The algorithm (and the cost of each step) can be thus summarized as follows:
\begin{itemize}
\item Offline:
\begin{enumerate}
\item Compute the training manifold ( $\ord(\cN^\alpha N_s)$). $\alpha$ depends on the linear solver, tipically $\alpha = 2$.
\item Compute the POD for $V_n(\Omega)$ ($\ord(\cN N_s^2)$).
\item Compute the observation space ($\ord(\cN^\alpha m)$).
\item Compute the matrix $\bG$ ($\ord(2\cN^2 m n)$).
\item Compute the matrix $\bG^T \bG$ ($\ord(mn^2)$).
\item Compute Cholesky factorization $\bG^T \bG = R^T R$ ($\ord(n^3/3$)).
\end{enumerate}
\item Online:
\begin{enumerate}
\item Solve the lower-triangular system $R^T w = \bG^T l$ for $w$ by backward substitution ($\ord(n^2$)).
\item Solve the upper-trianguler system $Rc = w$ for $c$ by backward substitution ($\ord(n^2$)).
\item Compute $\eta^*$ and the solution $v^* + \eta^*$ ($\ord(2(\cN m n))$).
\end{enumerate}
\end{itemize}
The reader must recall that this is the scheme for the single domain problem ($K = 1$). For the multi domain problem ($K > 1$), the online stage starts from item 3 of offline phase onwards, since the observation space is supposed to be unkown.

\bibliographystyle{unsrt}
\bibliography{GLM2021.bib}

\end{document}